\documentclass[14pt]{article}

\usepackage{mathtext}
\usepackage[T1,T2A]{fontenc}
\usepackage[utf8]{inputenc}
\usepackage[english]{babel}

\usepackage{amsfonts, amsmath, amssymb, amsthm, amscd}
\usepackage{mathtools}
\usepackage{xcolor}
\usepackage{ulem}

\usepackage{geometry}
\geometry{a4paper}

\usepackage{graphicx}
\usepackage{hyperref}
\usepackage{cancel}
\hypersetup{pdfstartview=FitH,  linkcolor=blue,urlcolor=urlcolor, citecolor=blue, colorlinks=true}

\theoremstyle{theorem}
\newtheorem{theorem}{Theorem}[section]
\theoremstyle{theorem}
\newtheorem{proposition}{Proposition}[section]
\theoremstyle{definition}
\newtheorem{definition}{Definition}[section]
\theoremstyle{lemma}
\newtheorem{lemma}{Lemma}[section]

\newcommand{\Label}[1]{\label{#1}}

\DeclarePairedDelimiter\abs{\lvert}{\rvert}

\usepackage{fancyhdr}
\pagestyle{fancy}

\lhead{}\chead{}\rhead{}
\lfoot{}\cfoot{\thepage}\rfoot{}

\title{Pseudo-Differential Equations with Weak Degeneration for Radial Functions of $p$-adic Argument}

\author{\textbf{Alexandra V. Antoniouk}\\
\footnotesize Institute of Mathematics,\\
\footnotesize National Academy of Sciences of Ukraine,\\
\footnotesize Tereshchenkivska 3, Kyiv, 01024 Ukraine,\\
\footnotesize E-mail: antoniouk.a@gmail.com
\and
\textbf{Anatoly N. Kochubei}\\
\footnotesize Institute of Mathematics,\\
\footnotesize National Academy of Sciences of Ukraine,\\
\footnotesize Tereshchenkivska 3, Kyiv, 01024 Ukraine,\\
\footnotesize E-mail: kochubei@imath.kiev.ua
\and
\textbf{Mariia V. Serdiuk}\\
\footnotesize Taras Shevchenko National University of Kyiv,\\
\footnotesize  Akademika Hlushkova Ave 4-е, Kyiv, 03127 Ukraine,\\
\footnotesize E-mail: mariia.v.serdiuk@gmail.com }

\begin{document}
\maketitle

\begin{abstract}
In paper A. N. Kochubei (Pacif. J. Math. 269 (2014), 355--369) a right inverse to Vladimirov-Taibleson fractional differential operator $D^{\alpha},\;\alpha>0$ was found. It turns out that this permits to reduce the $p$-adic Cauchy problem for radial functions to an integral equation whose properties resemble those of classical Volterra equations.

In this article, we develop further these ideas to an important class of degenerate pseudo-differential equations with operator $D^{\alpha},\;\alpha>0$.
We find the conditions of local and global solvability for corresponding nonlinear weakly degenerate equations.

\end{abstract}

\section{Introduction}

It is known that the operation of differentiation is not defined for mappings $\mathbb{Q}_p\to\mathbb{C}$, and the Vladimirov-Taibleson pseudo-differential operator $D^{\alpha}$ plays a role of a differential operator in the $p$-adic analysis. Pseudo-differential equations of such kind are widely used and have many applications in mathematical physics, e.g. \cite{Albeverio}, \cite{Vlad}, \cite{Kozyrev}. 
However, only in the works by Kochubei \cite{K1,K2} it was noted that the properties of the Vladimirov operator on radial functions are significantly simplified.
In \cite{K2} the nonlinear Cauchy problem 
\begin{equation}\Label{00-1}\
(D^{\alpha}u)(|t|_p)=f(|t|_p,u(|t|_p)),\;0\neq t\in \mathbb{Q}_p,
\end{equation}
with the initial condition
\begin{equation}\Label{00-2}\
u(0)=u_0
\end{equation}
was studied.
Here
\begin{equation}\Label{1-7}
(D^{\alpha}\varphi)(t)=\frac{1-p^{\alpha}}{1-p^{-\alpha-1}}\int\limits_{\mathbb{Q}_p}|y|_p^{-\alpha-1}\left[\varphi(t-y)-\varphi(y)\right]dy.
\end{equation}
 is the Vladimirov operator of fractional differentiation on a non-Archimedean local field $\mathbb{Q}_p$.

Generally speaking the operator $D^{\alpha}$ on $L^2(\mathbb{Q}_p)$ has a rather complicated structure. In particular, it has a point spectrum of infinite multiplicity, and it is well known that the corresponding ``evolution equation'' with this operator in the p-adic time variable $t$ does not have a fundamental solution.

At the same time if one considers only solutions depending on $\vert t\vert_p$ (see \cite{K1}), the equation (\ref{00-1}) may be considered as a $p$-adic analog of ordinary differential equation. In this case, the right inverse $I^\alpha$ to $D^\alpha$ gives a $p$-adic counterpart of the Riemann-Liouville fractional integral or, for $\alpha =1$, the classical antiderivative. Therefore the Cauchy problem for a (linear or nonlinear) equation with $D^{\alpha}$ on the radial restriction is reduced to an integral equation with properties close to those of classical Volterra equations. We can say that in \cite{K1,K2} a $p$-adic form of ordinary differential equations was found in the framework of $p$-adic pseudo-differential calculus.

Under appropriate conditions \cite{K2}, the nonlinear integral equation corresponding to (\ref{00-1})-(\ref{00-2}) is a) locally solvable, b) its solution is extended to all $t\in \mathbb Q_p$, and c) the solutions satisfy the Cauchy problem.

In this paper, we study a class of degenerate equations
\begin{equation}\Label{00-3}\
|t|_p^{\gamma}(D^{\alpha}u)(|t|_p)=f(|t|_p,u(|t|_p)),\;0\neq t\in \mathbb{Q}_p,
\end{equation}
with $\gamma>0$, and show that the methods from \cite{K1,K2} can be extended, with appropriate modifications, to the case of weak degeneration for $\gamma < \min (1,\alpha )$. 

\section{Preliminaries}

Let $\mathbb{Q}_p$ be the field of $p$-adic numbers which is defined as the completion of the field of rational numbers $\mathbb{Q}$ with respect to the non-Archimedean $p$-adic norm $|\cdot|_p$. For a rational number $x=p^{\gamma}(m/n)$, where the integers $m$ and $n$ are prime to $p$, the $p$-adic norm of $x$ equals $|x|_p=p^{-\gamma}$. This norm satisfies the strong triangle inequality $|x+y|_p\leq\max\left(|x|_p,|y|_p\right)$.

A function $\varphi:\mathbb{Q}_p\to\mathbb{C}$ is said to be {\it locally constant} if there exists such an integer $\ell$ that for any $x\in\mathbb{Q}_p$
$$\varphi(x+x')=\varphi(x),\mbox{ whenever }|x'|_p\leq p^{-\ell}.$$
The smallest of such numbers $\ell$ is called the exponent of local constancy of the function $\varphi$.

We denote by $\mathcal{D}(\mathbb{Q}_p)$ the linear topological space of all locally constant functions $\varphi:\mathbb{Q}_p\to\mathbb{C}$ with compact supports. The strong conjugate space $\mathcal{D}'(\mathbb{Q}_p)$ is called the space of Bruhat-Schwartz distributions.

Let $\chi_p(\xi x)=\exp(2\pi i\{\xi x\}_p)$, where $\xi\in\mathbb{Q}_p$, be an additive character of $\mathbb{Q}_p$. The Fourier transform of a test function $\varphi\in\mathcal{D}(\mathbb{Q}_p)$ is defined as
$$\widetilde{\varphi}(\xi)=\int\limits_{\mathbb{Q}_p}\chi_p(\xi x)\varphi(x)dx,\;x\in\mathbb{Q}_p.$$

The fractional differential operator $D^{\alpha},\;\alpha>0$ \eqref{1-7}, on $\mathcal{D}(\mathbb{Q}_p)$ is also defined as (see e.g. \cite{Vlad}):
\begin{equation}\Label{1-6}
(D^{\alpha}\varphi)(x)=\mathcal{F}^{-1}\big[|\xi|_p^{\alpha}(\mathcal{F}(\varphi))(\xi)\big](x).
\end{equation}

Further, we shall use the following integration formulas \cite{Vlad}:
\begin{align}\Label{0-11}
&\int\limits_{|x|_p\leq p^n}|x|_p^{\alpha-1}dx=\frac{1-p^{-1}}{1-p^{-\alpha}}p^{\alpha n};\mbox{ here and below }n\in\mathbb{Z},\;\alpha>0,\\
\Label{0-22}
&\int\limits_{|x|_p=p^n}|x-a|_p^{\alpha-1}dx=\frac{p-2+p^{-\alpha}}{p(1-p^{-\alpha})}|a|_p^{\alpha},\;|a|_p=p^n,\\
\Label{0-33}
&\int\limits_{|x|_p\leq p^n}\log|x|_pdx=\left(n-\frac{1}{p-1}\right)p^n \log p,\\
\Label{0-4}
&\int\limits_{|x|_p=p^n}\log|x-a|_pdx=\left[\left(1-\frac{1}{p}\right)\log|a|_p-\frac{\log p}{p-1}\right],\;|a|_p=p^n,\\
\Label{0-44}
&\int\limits_{|x|_p\leq p^n}dx=p^n;\;\int\limits_{|x|_p=p^n}dx=\left(1-\frac{1}{p}\right)p^n,
\end{align}
where $dx$ denotes the standard Haar measure on $\mathbb{Q}_p$, i.e. the invariant under shifts positive measure $dx$ such that $\int_{|x|_p\leq1}dx=1$.

Denote $d_{\alpha}=\frac{1-p^{\alpha}}{1-p^{-\alpha-1}}$. The following lemma states sufficient conditions when Vladimirov operator $D^\alpha$ on a radial function $u=u(|t|_p)$ is defined and gives an explicit expression for its action.

\begin{lemma}[\cite{K1}]\label{lemma1}
If a function $u=u(|x|_p)$ is such that
\begin{equation}\Label{3-1}
\sum\limits_{k=-\infty}^mp^k|u(p^k)|<\infty,
\end{equation}
\begin{equation}\Label{3-1'}
\sum\limits_{\ell=m}^{\infty}p^{-\alpha \ell}|u(p^{\ell})|<\infty,
\end{equation}
for some $m\in\mathbb{Z}$, then for any $n\in\mathbb{Z}$ the operator $(D^{\alpha}u)(|x|_p)$ defined in (\ref{1-7}) exists for $|x|_p=p^n$, depends only on $|x|_p$, and
\begin{align}\nonumber
(D^{\alpha}u)(p^n)=d_{\alpha}\left(1-\frac{1}{p}\right)p^{-(\alpha+1)n}\sum\limits_{k=-\infty}^{n-1}p^ku(p^k)+p^{-\alpha n-1}\frac{p^{\alpha}+p-2}{1-p^{-\alpha-1}}u(p^n)+\\
\nonumber
+d_{\alpha}\left(1-\frac{1}{p}\right)\sum\limits_{\ell=n+1}^{\infty}p^{-\alpha \ell}u(p^{\ell}).
\end{align}
\end{lemma}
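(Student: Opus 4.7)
The plan is to compute the integral (\ref{1-7}) for $\varphi(x) = u(|x|_p)$ at a point $t$ with $|t|_p = p^n$ by substituting $z = t - y$ and decomposing $\mathbb{Q}_p$ into the spheres $S_k = \{z : |z|_p = p^k\}$, $k\in\mathbb{Z}$. Using translation invariance of Haar measure, the integral becomes
$$(D^\alpha u)(p^n) = d_\alpha \int_{\mathbb{Q}_p} |t-z|_p^{-\alpha-1}\bigl[u(|z|_p) - u(p^n)\bigr]\,dz.$$
On each $S_k$ the ultrametric inequality pins down $|t-z|_p$: for $k < n$, $|t-z|_p = |t|_p = p^n$, so the kernel is the constant $p^{-n(\alpha+1)}$; for $k > n$, $|t-z|_p = |z|_p = p^k$, giving the constant $p^{-k(\alpha+1)}$. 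For $k = n$ the value $|t-z|_p$ is not determined, but precisely here $u(|z|_p) - u(p^n) = 0$, so the sphere that would otherwise contain the singularity at $z = t$ drops out entirely.

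Using $\operatorname{meas}(S_k) = (1-p^{-1})p^k$ from (\ref{0-44}), each spherical integral reduces to a single term and the whole integral splits as a pair of series over $k < n$ and $k > n$. The two hypotheses (\ref{3-1}) and (\ref{3-1'}) match these tails exactly: the $k < n$ tail is controlled by $p^{-n(\alpha+1)}\cdot p^k\cdot |u(p^k)|$, hence by (\ref{3-1}), while the $k > n$ tail is controlled by $p^{-k(\alpha+1)}\cdot p^k\cdot |u(p^k)| = p^{-k\alpha}|u(p^k)|$, hence by (\ref{3-1'}). This simultaneously gives absolute convergence, justifies Fubini, and shows that the result depends on $t$ only through $|t|_p = p^n$, confirming the radial character of $D^\alpha u$.

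It then remains to collect terms. The $u(p^k)$ pieces with $k \neq n$ give immediately the two displayed series in the statement. The $u(p^n)$ pieces, picked up from both ranges, are evaluated via the geometric sums $\sum_{k \le n-1} p^k = p^n/(p-1)$ and $\sum_{k \ge n+1} p^{-k\alpha} = p^{-(n+1)\alpha}/(1-p^{-\alpha})$, producing a single coefficient of $u(p^n)$ equal to
$$-d_\alpha(1-p^{-1})\Bigl[\tfrac{p^{-n\alpha}}{p-1} + \tfrac{p^{-(n+1)\alpha}}{1-p^{-\alpha}}\Bigr].$$
The main (purely algebraic) step is to verify that this expression simplifies, using $(1-p^{-1})/(p-1) = 1/p$ and $d_\alpha = (1-p^\alpha)/(1-p^{-\alpha-1})$, to $p^{-\alpha n - 1}(p^\alpha + p - 2)/(1-p^{-\alpha-1})$; factoring $-d_\alpha p^{-\alpha n-1}/(p^\alpha-1)$ outside the bracket makes the cancellation transparent. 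No genuine analytic obstacle appears; the delicate point is handling the $k = n$ sphere (avoided thanks to the vanishing of the bracket) and the final algebraic collapse producing the stated coefficient of $u(p^n)$.
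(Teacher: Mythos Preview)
The paper does not prove this lemma; it is quoted verbatim from \cite{K1} without argument, so there is nothing to compare against here. Your proof is correct and is in fact the standard computation: decompose $\mathbb{Q}_p$ into spheres, use the ultrametric to evaluate $|t-z|_p$ on each $S_k$, observe that the $k=n$ sphere contributes zero because $u$ is radial (so the singularity of the kernel is harmless), and collect the two geometric series arising from the $-u(p^n)$ terms into the middle coefficient. Your algebraic simplification is right; the identity $(1-p^{-1})/(p-1)=p^{-1}$ together with $p^{-\alpha}/(1-p^{-\alpha})=1/(p^\alpha-1)$ indeed yields $p^{-\alpha n-1}(p^\alpha+p-2)/(1-p^{-\alpha-1})$.

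One small remark: formula~\eqref{1-7} as printed has $\varphi(t-y)-\varphi(y)$, which is a typo for the standard $\varphi(t-y)-\varphi(t)$; you have silently used the correct version when writing the integrand as $u(|z|_p)-u(p^n)$ after the substitution $z=t-y$. That is the right thing to do, but it would be worth flagging the misprint explicitly.
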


In \cite{K1}, an analogue of the integral with a variable upper limit was introduced, which allowed to consider the problem (\ref{00-1}) - (\ref{00-2}) as $p$-adic counterpart of ODE.

\begin{definition}[\cite{K1}]
The $p$-{\it adic fractional integral} is defined for any function $\varphi\in\mathcal{D}(\mathbb{Q}_p)$ as follows:
\begin{equation}\Label{1-8}
(I^{\alpha}\varphi)(x)=\frac{1-p^{-\alpha}}{1-p^{\alpha-1}}\int\limits_{|y|_p\leq|x|_p}\big(|x-y|_p^{\alpha-1}-|y|_p^{\alpha-1}\big)\varphi(y)dy,\;\alpha\neq1,
\end{equation}
\begin{equation}\Label{1-9}
(I^1\varphi)(x)=\frac{1-p}{p\log p}\int\limits_{|y|_p\leq|x|_p}\big(\log|x-y|_p-\log|y|_p\big)\varphi(y)dy.
\end{equation}
\end{definition}

Below we give a set of important lemmas, which describe the main properties of this new type integral $I^{\alpha}$ acting on any radial function, needed for further considerations.

\begin{lemma}[\cite{K1}]\label{lemma2}
Suppose that
\begin{equation}\Label{2-7}\
\sum\limits_{k=-\infty}^m\max(p^k,p^{\alpha k})|u(p^k)|<\infty,\mbox{ if }\alpha\neq1,
\end{equation}
\begin{equation}\Label{2-8}\
\sum\limits_{k=-\infty}^m|k|\;p^k|u(p^k)|<\infty,\mbox{ if }\alpha=1,
\end{equation}
for some $m\in\mathbb{Z}$. Then $I^{\alpha}u$ exists, it is a radial function, and for any $x\neq0$:
$$
(I^{\alpha}u)(|x|_p)=p^{-\alpha}|x|_p^{\alpha}u(|x|_p)+\frac{1-p^{-\alpha}}{1-p^{\alpha-1}}\int\limits_{|y|_p<|x|_p}(|x|_p^{\alpha-1}-|y|_p^{\alpha-1})u(|y|_p)dy,\;\alpha\neq1,
$$
$$
(I^1u)(|x|_p)=p^{-1}|x|_pu(|x|_p)+\frac{1-p}{p\log p}\int\limits_{|y|_p<|x|_p}\left(\log|x|_p-\log|y|_p\right)u(|y|_p)dy.
$$
\end{lemma}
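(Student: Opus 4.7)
The plan is to exploit the ultrametric geometry to split the integration domain in the definition \eqref{1-8}--\eqref{1-9} as the disjoint union of the open ball $\{|y|_p<|x|_p\}$ and the sphere $\{|y|_p=|x|_p\}$, and evaluate each piece separately. On the ball, the strong triangle inequality forces $|x-y|_p=|x|_p$, so the integrand collapses to $(|x|_p^{\alpha-1}-|y|_p^{\alpha-1})u(|y|_p)$ for $\alpha\neq 1$ (respectively $(\log|x|_p-\log|y|_p)u(|y|_p)$ for $\alpha=1$). This piece, once multiplied by the corresponding prefactor, reproduces precisely the integral appearing on the right-hand side of the asserted formulas.

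The substance of the computation lies on the sphere $|y|_p=|x|_p=p^n$, where $u(|y|_p)=u(p^n)$ is constant and may be pulled out. For $\alpha\neq 1$ I would apply \eqref{0-22} to evaluate $\int_{|y|_p=p^n}|x-y|_p^{\alpha-1}\,dy$ and \eqref{0-44} to evaluate $\int_{|y|_p=p^n}|y|_p^{\alpha-1}\,dy=(1-p^{-1})p^{n\alpha}$. After multiplying the difference by the constant $\tfrac{1-p^{-\alpha}}{1-p^{\alpha-1}}$, the factor $(1-p^{-\alpha})$ clears against the denominator of the first integral and the remaining bracket simplifies to $p^{1-\alpha}-1$; using the algebraic identity $1-p^{\alpha-1}=-p^{\alpha-1}(1-p^{1-\alpha})$, this collapses cleanly to $p^{-\alpha}|x|_p^{\alpha}u(|x|_p)$. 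The logarithmic case $\alpha=1$ proceeds identically in structure, with \eqref{0-4} in place of \eqref{0-22}; the constant $\tfrac{1-p}{p\log p}$ in \eqref{1-9} is tailored precisely to trigger the analogous cancellation yielding $p^{-1}|x|_p\,u(|x|_p)$.

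Radiality of $I^\alpha u$ is then immediate because the resulting expression depends on $x$ only through $|x|_p$. For existence and absolute convergence I would decompose the ball $\{|y|_p<|x|_p\}$ into spheres $|y|_p=p^k$ for $k\le n-1$ and estimate termwise: the $|y|_p^{\alpha-1}$ piece contributes on the order of $p^{\alpha k}|u(p^k)|$ per sphere, the $|x|_p^{\alpha-1}$ piece on the order of $p^k|u(p^k)|$, so both sums are dominated by the hypothesis \eqref{2-7}; in the $\alpha=1$ case the logarithms produce an additional factor $|k|$, matching \eqref{2-8} exactly. The main obstacle I anticipate is the bookkeeping in the coefficient simplification on the sphere — verifying that the specific prefactors in \eqref{1-8}--\eqref{1-9} conspire with the tabulated integrals to yield exactly $p^{-\alpha}$ and $p^{-1}$ in the surface term; everything else is either purely geometric (ultrametricity collapsing $|x-y|_p$ on the inner ball) or a standard $L^1$ majorization.
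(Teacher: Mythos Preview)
The paper does not actually supply a proof of this lemma; it is quoted verbatim from \cite{K1} and used as a black box. Your sketch is correct and is, in outline, the standard argument (and the one given in \cite{K1}): split the closed ball into the open ball and the boundary sphere, use the ultrametric identity $|x-y|_p=|x|_p$ on the former, and on the latter invoke the tabulated sphere integrals \eqref{0-22}, \eqref{0-4}, \eqref{0-44} to reduce the surface contribution to $p^{-\alpha}|x|_p^{\alpha}u(|x|_p)$ (respectively $p^{-1}|x|_p\,u(|x|_p)$). Your anticipated algebraic cancellation is exactly right: writing $p^{1-\alpha}-1=p^{1-\alpha}(1-p^{\alpha-1})$ cancels the denominator of the prefactor and leaves precisely $p^{-\alpha}$. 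The convergence argument via sphere decomposition is also the standard one and matches the hypotheses \eqref{2-7}--\eqref{2-8} termwise, as you indicate.
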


The following Lemma shows that on a certain class of radial functions the operator $I^{\alpha}$ acts as a right inverse of the operator $D^{\alpha}$.

\begin{lemma}[\cite{K1}]\label{lemma3}
Suppose that for some $m\in\mathbb{Z}$,
\begin{equation}\Label{3-2}\
\sum\limits_{k=-\infty}^m\max(p^k,p^{\alpha k})|v(p^k)|<\infty,\;\sum\limits_{\ell=m}^{\infty}|v(p^{\ell})|<\infty,
\end{equation}
if $\alpha\neq1$, and
\begin{equation}\Label{3-3}\
\sum\limits_{k=-\infty}^m|k|\;p^k|v(p^k)|<\infty,\;\sum\limits_{\ell=m}^{\infty}\ell|v(p^{\ell})|<\infty,
\end{equation}
if $\alpha=1$. Then there exists $(D^{\alpha}I^{\alpha}v)(|x|_p)=v(|x|_p)$ for any $x\neq0$.
\end{lemma}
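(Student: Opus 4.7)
The plan is to set $u := I^{\alpha}v$, compute $u(p^n)$ in closed form via Lemma~\ref{lemma2}, and then apply the explicit formula of Lemma~\ref{lemma1} to $u$. For $\alpha\neq 1$, evaluating the integral in Lemma~\ref{lemma2} sphere-by-sphere (using $\int_{|y|_p=p^k}dy=(1-p^{-1})p^k$) yields
\[
u(p^n)=p^{\alpha(n-1)}v(p^n)+c_{\alpha}\left[p^{(\alpha-1)n}\sum_{j<n}p^{j}v(p^{j})-\sum_{j<n}p^{\alpha j}v(p^{j})\right],
\]
with $c_{\alpha}=(1-p^{-\alpha})(1-p^{-1})/(1-p^{\alpha-1})$; an analogous representation holds for $\alpha=1$ with the inner geometric factors replaced by a difference of logarithms coming from (\ref{1-9}).

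The first step is to verify that $u$ satisfies the hypotheses (\ref{3-1})--(\ref{3-1'}) of Lemma~\ref{lemma1}. The diagonal piece $p^{\alpha(n-1)}v(p^n)$ is directly controlled by the weight $p^{\alpha k}$ inside $\max(p^k,p^{\alpha k})$ in (\ref{3-2}) and, on the tail side, by $\sum_{\ell\ge m}|v(p^{\ell})|<\infty$. For the off-diagonal double sums one interchanges the order of summation (Fubini for series with positive terms): the resulting inner geometric sums $\sum_{\ell>j}p^{-\ell}$ and $\sum_{\ell>j}p^{-(\alpha+1)\ell}$ precisely convert the summability in (\ref{3-2}) into the required finiteness of $\sum_{k\le m'}p^{k}|u(p^k)|$ and $\sum_{\ell\ge m'}p^{-\alpha\ell}|u(p^\ell)|$. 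In the case $\alpha=1$, the extra factors $|k|$ and $\ell$ in (\ref{3-3}) absorb the logarithmic growth introduced by (\ref{1-9}).

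Next, I substitute the closed form for $u$ into each of the three terms of the formula of Lemma~\ref{lemma1}. This produces double sums in which the outer index ranges over $k<n$, equals $n$, or ranges over $\ell>n$, and an inner index $j$ runs through the partial sums contained in $u$. After interchanging the order of summation (legitimate by the absolute convergence just established) and collecting, for each fixed $j\in\mathbb{Z}$, the total coefficient of $v(p^{j})$, the statement reduces to the purely algebraic identity
\[
\text{coefficient of }v(p^{j})=
\begin{cases}1, & j=n,\\ 0, & j\neq n.\end{cases}
\]
The required geometric subsums $\sum_{k=j+1}^{n-1}p^{k}$, $\sum_{k=j+1}^{n-1}p^{\alpha k}$, $\sum_{\ell=n+1}^{\infty}p^{-\ell}$ and $\sum_{\ell=n+1}^{\infty}p^{-\alpha\ell}$ are evaluated in closed form, and the identities $d_{\alpha}=(1-p^{\alpha})/(1-p^{-\alpha-1})$ together with the decomposition $p^{\alpha}+p-2=(p^{\alpha}-1)+(p-1)$ of the middle-term coefficient split the calculation into matching pairs that cancel for $j\neq n$ and sum to $1$ for $j=n$.

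The main obstacle is precisely this algebraic simplification: the coefficients produced carry denominators of the four forms $1-p^{-1}$, $1-p^{-\alpha}$, $1-p^{\alpha-1}$ and $1-p^{-\alpha-1}$, and one must track carefully which pieces cancel against which. For $\alpha=1$ the geometric sums degenerate and are replaced throughout by arithmetic-type sums $\sum k\,p^{k}$ evaluated by the integration formulas (\ref{0-33})--(\ref{0-4}); the structure of the cancellation, however, is identical.
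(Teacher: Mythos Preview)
The paper does not actually supply a proof of this lemma: it is quoted verbatim from \cite{K1} (Kochubei, \emph{Pacif.\ J.\ Math.}\ 269 (2014), 355--369), and the surrounding text simply records it together with Lemmas~\ref{lemma1} and~\ref{lemma2} as background facts. So there is nothing in the present paper to compare your attempt against.

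That said, your outline is the natural direct computation and is, as far as I can see, correct. You expand $u=I^{\alpha}v$ via Lemma~\ref{lemma2} into the three-piece form, feed it into the three-piece formula of Lemma~\ref{lemma1}, and reduce the claim to the algebraic identity that the coefficient of $v(p^{j})$ in $(D^{\alpha}u)(p^{n})$ equals $\delta_{jn}$. Two remarks. First, the convergence step (showing that $u$ satisfies (\ref{3-1})--(\ref{3-1'})) is slightly more delicate than you indicate: when verifying (\ref{3-1'}) the inner sums $\sum_{j<\ell}p^{j}|v(p^{j})|$ must be split at $j=m$, so that the portion $j<m$ is controlled by the first condition in (\ref{3-2}) while the portion $m\le j<\ell$ is controlled by the second; your phrase ``the resulting inner geometric sums \ldots\ precisely convert'' glosses over this split. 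Second, for the algebraic cancellation at $j\neq n$, it helps to observe in advance that $c_{\alpha}\,d_{\alpha}(1-p^{-1})=-(1-p^{-\alpha})(1-p^{\alpha})/(1-p^{-\alpha-1})$ and to group terms according to whether they carry a factor $p^{(\alpha-1)n}$ or not; this makes the pairwise cancellations transparent and avoids tracking all four denominator types simultaneously. This is almost certainly the same computation carried out in \cite{K1}.
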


For an operator-theoretic treatment of the above operators, especially for $\alpha =1$, see \cite{K3}.

\section{Main results}

Let $\alpha>0$, $\gamma>0$. We consider the problem
\begin{equation}\Label{0-1}\
|t|_p^{\gamma}(D^{\alpha}u)(|t|_p)=f(|t|_p,u(|t|_p)),\;0\neq t\in\mathbb{Q}_p,
\end{equation}
with the initial condition
\begin{equation}\Label{0-2}\
u(0)=u_0.
\end{equation}

We suppose that the function $f:p^{\mathbb{Z}}\times\mathbb{R}\to\mathbb{R}$ satisfies the conditions
\begin{equation}\Label{0-5}\
|f(|t|_p,x)|\leq M,
\end{equation}
\begin{equation}\Label{0-6}\
|f(|t|_p,x)-f(|t|_p,y)|\leq F|x-y|,
\end{equation}
for all $t\in\mathbb{Q}_p,\;x,y\in\mathbb{R}$ and some constants $M,F$ independent on $t,x,y$.

With the problem \eqref{0-1}-\eqref{0-2} we associate the integral equation
\begin{equation}\Label{0-8}\
u(|t|_p)=u_0+I^{\alpha}\left[|\cdot|_p^{-\gamma}f(|\cdot|_p,u(|\cdot|_p))\right](|t|_p).
\end{equation}

\begin{definition}
We call a solution $u$ to the equations \eqref{0-8}, if it exists, a \textit{mild solution} to the Cauchy problem \eqref{0-1}-\eqref{0-2}.
\end{definition}

Further we will need the following Lemma.

\begin{lemma}\label{pr1}
Let
$$I_{\alpha,\sigma}=\int\limits_{|y|_p<|t|_p}\bigg\vert|t|_p^{\alpha-1}-|y|_p^{\alpha-1}\bigg\vert|y|_p^{\alpha \sigma}dy,\;\alpha\neq1,$$
$$I_{1,\sigma}=\int\limits_{|y|_p<|t|_p}\bigg\vert\log|t|_p-\log|y|_p\bigg\vert|y|_p^{\sigma}dy.$$
Then for any $\sigma>\max(-\frac{1}{\alpha},-1)$ (not necessarily integer):
$$I_{\alpha,\sigma}=d_{\alpha,\sigma}|t|_p^{\alpha(\sigma+1)},$$
with $d_{\alpha,\sigma}$ independent on $t$.
Moreover, for any $\varepsilon>0$ there exists a constant $A>0$, independent on $\sigma$, such that
\begin{equation}\Label{2-6}\
0<d_{\alpha,\sigma}\leq Aq^{-\alpha \sigma},\;\textnormal{for any}\;\sigma\geq\max(-\frac{1}{\alpha},-1)+\varepsilon.
\end{equation}
\end{lemma}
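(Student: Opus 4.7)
The plan is to replace the $p$-adic integral by a sum over spheres. Since $|t|_p = p^n$ for some $n \in \mathbb{Z}$, the set $\{y : |y|_p < |t|_p\}$ decomposes as a disjoint union of spheres $S_k = \{y : |y|_p = p^k\}$ for $k \leq n-1$. On each $S_k$ the integrand depends only on $|y|_p$, so by \eqref{0-44} the integral collapses to a series with weight $(1 - p^{-1})\,p^k$.

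For $\alpha \neq 1$, this yields
$$
I_{\alpha,\sigma} = (1 - p^{-1}) \sum_{k=-\infty}^{n-1} \bigl| p^{(\alpha-1)n} - p^{(\alpha-1)k} \bigr|\, p^{(\alpha\sigma + 1)k}.
$$
The sign inside the absolute value is determined by whether $\alpha>1$ or $\alpha<1$, but in either case, after splitting it and summing two geometric series (which converge precisely under $\alpha\sigma + 1 > 0$ and $\alpha(\sigma+1) > 0$, i.e.\ under $\sigma > \max(-1/\alpha,-1)$), one factors out $p^{\alpha(\sigma+1)n} = |t|_p^{\alpha(\sigma+1)}$ and, bringing the two elementary fractions over a common denominator, obtains
$$
d_{\alpha,\sigma} = (1-p^{-1}) \cdot \frac{p^{\alpha\sigma}\, |p^\alpha - p|}{(p^{\alpha\sigma+1}-1)\,(p^{\alpha(\sigma+1)}-1)},
$$
which is manifestly positive. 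For $\alpha = 1$, the same sphere decomposition together with $\log|t|_p - \log|y|_p = (n-k)\log p$ on $S_k$ gives $(1-p^{-1})(\log p)\sum_{k\leq n-1}(n-k)\,p^{(\sigma+1)k}$; substituting $j = n-k$ and using the classical identity $\sum_{j\geq 1} j\, r^j = r/(1-r)^2$ with $r = p^{-(\sigma+1)} < 1$ gives a closed form proportional to $|t|_p^{\sigma+1}$, matching the $\alpha=1$ case.

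For the bound \eqref{2-6}, multiply through by $p^{\alpha\sigma}$:
$$
p^{\alpha\sigma}\, d_{\alpha,\sigma} = (1-p^{-1})\,|p^\alpha - p| \cdot \frac{p^{2\alpha\sigma}}{(p^{\alpha\sigma+1}-1)\,(p^{\alpha(\sigma+1)}-1)}.
$$
As $\sigma \to \infty$, the two denominator factors are asymptotic to $p^{\alpha\sigma+1}$ and $p^{\alpha(\sigma+1)}$, so the right-hand side tends to the finite constant $(1-p^{-1})\,|p^\alpha - p|\, p^{-\alpha-1}$. On the other hand, for $\sigma \geq \max(-1/\alpha,-1)+\varepsilon$, both denominator factors stay bounded below by strictly positive quantities depending only on $\varepsilon$, $p$, and $\alpha$; hence the right-hand side is a bounded continuous function of $\sigma$ on $[\max(-1/\alpha,-1)+\varepsilon,\infty)$, giving the required $A$. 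The $\alpha=1$ case is handled the same way using that $(1 - p^{-(\sigma+1)})^2 \geq (1 - p^{-\varepsilon})^2 > 0$. The main obstacle is essentially bookkeeping: isolating the critical denominator near each of the two possible boundaries (depending on the sign of $\alpha-1$) and checking that the resulting estimate is uniform in $\sigma$ all the way out to infinity.
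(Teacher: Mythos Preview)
Your proof is correct and follows essentially the same route as the paper: decompose the integration domain into spheres (the paper invokes \eqref{0-11} directly for $\alpha\neq 1$, which amounts to the same computation), sum the resulting geometric series to obtain an explicit $d_{\alpha,\sigma}$, and then bound $p^{\alpha\sigma}d_{\alpha,\sigma}$ uniformly for $\sigma\geq\theta=\max(-\tfrac{1}{\alpha},-1)+\varepsilon$. The only cosmetic difference is that the paper produces the constant $A$ explicitly by plugging the threshold value $\sigma=\theta$ into the denominators, whereas you argue via continuity together with the finite limit as $\sigma\to\infty$; both arguments are valid.
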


\begin{proof}
Let $\alpha>1$. For $|t|_p=p^n$ due to \eqref{0-11} we have
\begin{multline}\nonumber
I_{\alpha,\sigma}=|t|_p^{\alpha-1}\int\limits_{|y|_p\leq p^{n-1}}|y|_p^{\alpha \sigma}dy-\int\limits_{|y|_p\leq p^{n-1}}|y|_p^{\alpha(\sigma+1)-1}dy=\\
\nonumber
=p^{n(\alpha-1)}\frac{1-p^{-1}}{1-p^{-\alpha \sigma-1}}p^{(\alpha \sigma+1)(n-1)}-\frac{1-p^{-1}}{1-p^{-\alpha(\sigma+1)}}p^{\alpha(\sigma+1)(n-1)}=d_{\alpha,\sigma}p^{n\alpha(\sigma+1)},
\end{multline}
where the first integral converges under the condition $\alpha \sigma+1>0$, that is $\sigma>-\frac{1}{\alpha}$, and the second one - under the condition $\alpha(\sigma+1)>0$, that is $\sigma>-1$, and
\begin{multline}\nonumber
d_{\alpha,\sigma}=\frac{1-p^{-1}}{1-p^{-\alpha \sigma-1}}p^{-\alpha \sigma-1}-\frac{1-p^{-1}}{1-p^{-\alpha \sigma-\alpha}}p^{-\alpha \sigma-\alpha}=\\
\nonumber
=(1-p^{-1})\frac{p^{\alpha-1}-1}{(1-p^{-\alpha \sigma-1})(p^{\alpha}-p^{-\alpha \sigma})}p^{-\alpha \sigma}.
\end{multline}
If for some $\varepsilon>0$ we have $\sigma\geq\max(-\frac{1}{\alpha},-1)+\varepsilon\eqqcolon\theta$, then
$$d_{\alpha,\sigma}\leq (1-p^{-1})\frac{p^{\alpha-1}-1}{(1-p^{-\alpha \theta-1})(p^{\alpha}-p^{-\alpha \theta})}p^{-\alpha \sigma}\eqqcolon Ap^{-\alpha \sigma}.$$

Similarly, for $0<\alpha<1$ we have
$$I_{\alpha,\sigma}=\int\limits_{|y|_p\leq p^{n-1}}|y|_p^{\alpha(\sigma+1)-1}dy-|t|_p^{\alpha-1}\int\limits_{|y|_p\leq p^{n-1}}|y|_p^{\alpha \sigma}dy=d_{\alpha,\sigma}|t|_p^{\alpha(\sigma+1)},$$
where
$$d_{\alpha,\sigma}=(1-p^{-1})\frac{1-p^{\alpha-1}}{(1-p^{-\alpha \sigma-1})(p^{\alpha}-p^{-\alpha \sigma})}p^{-\alpha \sigma}\leq Ap^{-\alpha \sigma},\;\sigma\geq\max(-\frac{1}{\alpha},-1)+\varepsilon.$$

If $\alpha=1$, we have
\begin{multline}\nonumber
I_{1,\sigma}=\sum\limits_{k=-\infty}^{n-1}\int\limits_{|y|_p=p^k}\left(\log|t|_p-\log|y|_p\right)|y|_p^{\sigma}dy=\\
\nonumber
=\sum\limits_{k=-\infty}^{n-1}\bigg\{n\log p\int\limits_{|y|_p=p^k}q^{k\sigma}dy-\int\limits_{|y|_p=p^k}k(\log p)p^{k\sigma}dy\bigg\}=\\
\nonumber
\\=(1-p^{-1})\log p\sum\limits_{k=-\infty}^{n-1}(n-k)p^{k(\sigma+1)}=(1-p^{-1})\log p\sum\limits_{\nu=1}^{\infty}\nu p^{(n-k)(\sigma+1)}=\\
\nonumber
=d_{1,\sigma}p^{n\alpha(\sigma+1)},
\end{multline}
where for $\sigma>-1$,
$$d_{1,\sigma}=(1-p^{-1})\log p\sum\limits_{\nu=1}^{\infty}\nu p^{-\nu(\sigma+1)}=(1-p^{-1})\log p\frac{p^{-\sigma-1}}{(1-p^{-\sigma-1})^2},$$
and for $\sigma\geq-1+\varepsilon\eqqcolon\theta$ we have
$$d_{1,\sigma}\leq(1-p^{-1})\log p\frac{p^{-1}}{(1-p^{-\theta-1})^2}p^{-\sigma}\eqqcolon Ap^{-\sigma}.$$

\end{proof}

The next three propositions contain estimates of $I^\alpha \varphi$ for a radial function $\varphi$.

\medskip
\begin{proposition}\label{pr2}
Let $\gamma<\min(1,\alpha)$ and let $\varphi=\varphi(|t|_p)$ be a radial function such that for all $0\neq t\in\mathbb{Q}_p$:
\begin{equation}\Label{0-8.4}
\abs*{\varphi(|t|_p)}\leq\mu|t|_p^{-\gamma}.
\end{equation}
with $\mu >0$.  Then the function $\varphi$ satisfies the conditions \eqref{2-7}-\eqref{2-8}, $I^{\alpha}\varphi$ exists, is a radial function, and there exists a constant $C_0$ such that
\begin{equation}\Label{0-8.5}\
\abs*{(I^{\alpha}\varphi)(|t|_p)}\leq C_0\mu|t|_p^{\alpha-\gamma}.
\end{equation}
\end{proposition}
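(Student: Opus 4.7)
The plan is to verify the summability hypotheses of Lemma~\ref{lemma2} for $\varphi$, then apply the explicit formula for $I^{\alpha}\varphi$ provided there, bounding the boundary term directly and the integral term by invoking Lemma~\ref{pr1}. The assumption $\gamma<\min(1,\alpha)$ is tailored exactly to make both steps go through. Substituting $|\varphi(p^{k})|\leq\mu p^{-\gamma k}$ into \eqref{2-7}, the $k$-th summand is at most $\mu\max(p^{(1-\gamma)k},p^{(\alpha-\gamma)k})$, which gives two geometric series converging as $k\to-\infty$ because $1-\gamma>0$ and $\alpha-\gamma>0$; in the special case $\alpha=1$, the $k$-th summand of \eqref{2-8} is $\mu|k|p^{(1-\gamma)k}$, again summable. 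Consequently $I^{\alpha}\varphi$ exists as a radial function and splits into the diagonal piece $p^{-\alpha}|t|_{p}^{\alpha}\varphi(|t|_{p})$ and an integral over $|y|_{p}<|t|_{p}$.

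For the estimate \eqref{0-8.5}, the diagonal piece is immediately controlled by $p^{-\alpha}\mu|t|_{p}^{\alpha-\gamma}$. To handle the integral piece I would insert $|\varphi(|y|_{p})|\leq\mu|y|_{p}^{-\gamma}$ and recognize the result as a constant multiple of $I_{\alpha,\sigma}$ from Lemma~\ref{pr1} with $\sigma=-\gamma/\alpha$ when $\alpha\neq1$, and with $\sigma=-\gamma$ when $\alpha=1$. The admissibility requirement $\sigma>\max(-1/\alpha,-1)$ unpacks to $\gamma<1$ for $\alpha>1$ and to $\gamma<\alpha$ for $\alpha<1$, both of which hold by hypothesis. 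Lemma~\ref{pr1} then yields $I_{\alpha,\sigma}=d_{\alpha,\sigma}|t|_{p}^{\alpha(\sigma+1)}=d_{\alpha,\sigma}|t|_{p}^{\alpha-\gamma}$, and multiplying by the coefficient $\bigl|(1-p^{-\alpha})/(1-p^{\alpha-1})\bigr|$ (or its log analogue) and by $\mu$ produces a bound of the required form; adding the diagonal contribution gives the desired $C_{0}=C_{0}(p,\alpha,\gamma)$.

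I do not foresee a genuine obstacle: the strict inequality $\gamma<\min(1,\alpha)$ provides exactly the margin needed both for the tail sums to converge and for $\sigma$ to sit strictly above the critical exponent in Lemma~\ref{pr1}. The only mild nuisance is carrying the three cases $\alpha>1$, $\alpha<1$, $\alpha=1$ in parallel, but each of them produces the same scaling $|t|_{p}^{\alpha-\gamma}$ demanded by \eqref{0-8.5}, so the final constant can be written down uniformly.
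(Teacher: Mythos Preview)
Your proposal is correct and follows essentially the same route as the paper: verify \eqref{2-7}--\eqref{2-8} from the bound $|\varphi(p^{k})|\le\mu p^{-\gamma k}$ (splitting into the cases $\alpha>1$, $0<\alpha<1$, $\alpha=1$), then use the explicit expression in Lemma~\ref{lemma2} and control the integral term via Lemma~\ref{pr1} with $\sigma=-\gamma/\alpha$ (respectively $\sigma=-\gamma$ when $\alpha=1$), arriving at $C_{0}=p^{-\alpha}+\bigl|\tfrac{1-p^{-\alpha}}{1-p^{\alpha-1}}\bigr|\,d_{\alpha,-\gamma/\alpha}$ (and the analogous constant for $\alpha=1$). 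Your inclusion of the absolute value on the coefficient is in fact a small improvement over the paper's write-up.
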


\begin{proof}
Let us check the conditions of Lemma \ref{lemma2} for the function $\varphi$. Let $\alpha>1$. For any $m\leq0$ we have
$$\sum\limits_{k=-\infty}^m\max\, (p^k,p^{\alpha k})\,|\varphi(p^k)|=\sum\limits_{k=-\infty}^mp^k|\varphi(p^k)|\leq\mu\sum\limits_{k=-\infty}^mp^{(1-\gamma)k}<+\infty,$$
since $\gamma<1$. In the case $0<\alpha<1$ we have
$$\sum\limits_{k=-\infty}^m\max\,(p^k,p^{\alpha k})|\varphi(p^k)|=\sum\limits_{k=-\infty}^mp^{\alpha k}|\varphi(p^k)|\leq\mu\sum\limits_{k=-\infty}^mp^{(\alpha-\gamma)k}<+\infty,$$
since $\gamma<\alpha$. If $\alpha=1$, we have
$$\sum\limits_{k=-\infty}^m|k|\,p^k|\varphi(p^k)|\leq\mu\sum\limits_{k=-\infty}^m|k|p^{(1-\gamma)k}<+\infty.$$
Thus it follows from Lemma \ref{lemma2} that $I^{\alpha}\varphi$ exists. Moreover, for $\alpha\neq1$ we have:
\begin{multline}\nonumber
\abs*{(I^{\alpha}\varphi)(|t|_p)}=\abs*{p^{-\alpha}|t|_p^{\alpha}\varphi(|t|_p)+\frac{1-p^{-\alpha}}{1-p^{\alpha-1}}\int\limits_{|y|_p<|t|_p}\left(|t|_p^{\alpha-1}-|y|_p^{\alpha-1}\right)\varphi(|y|_p)dy}\leq\\
\nonumber
\leq p^{-\alpha}\mu|t|_p^{\alpha-\gamma}+\frac{1-p^{-\alpha}}{1-p^{\alpha-1}}\mu\int\limits_{|y|_p<|t|_p}\abs*{|t|_p^{\alpha-1}-|y|_p^{\alpha-1}}|y|_p^{-\gamma}dy=\\
\nonumber
=p^{-\alpha}\mu|t|_p^{\alpha-\gamma}+\frac{1-p^{-\alpha}}{1-p^{\alpha-1}}\mu I_{\alpha,-\frac{\gamma}{\alpha}}.
\end{multline}
Now we apply Lemma \ref{pr1} with $\sigma=-\frac{\gamma}{\alpha}$, then for $\gamma<\min(1,\alpha)$ we have $\sigma>\max(-\frac{1}{\alpha},-1)$. Therefore
$$\abs*{(I^{\alpha}\varphi)(|t|_p)}\leq C_0\mu|t|_p^{\alpha-\gamma},$$
where $C_0=p^{-\alpha}+\frac{1-p^{-\alpha}}{1-p^{\alpha-1}}d_{\alpha,-\frac{\gamma}{\alpha}}$.

For $\alpha=1$ we have
\begin{multline}\nonumber
\abs*{(I^1\varphi)(|t|_p)}=\abs*{p^{-1}|t|_p\varphi(|t|_p)+\frac{1-p}{p\log p}\int\limits_{|y|_p<|t|_p}\left(\log|t|_p-\log|y|_p\right)\varphi(|y|_p)dy}\leq\\
\nonumber
\leq p^{-1}\mu|t|_p^{1-\gamma}+\frac{1-p}{p\log p}\mu I_{1,-\gamma}.
\end{multline}
For $\gamma<1$ we can apply Lemma \ref{pr1} with $\sigma=-\gamma$ to obtain
$$\abs*{(I^1\varphi)(|t|_p)}\leq C_0\mu|t|_p^{1-\gamma},$$
where $C_0=p^{-1}+\frac{1-p}{p\log p}d_{1,-\gamma}$, which finishes the proof.
\end{proof}

\begin{proposition}\label{pr3}
Let $n\in\mathbb{N}$, $\gamma<\min(1,\alpha)$ and let $\varphi=\varphi(|t|_p)$ be a radial function such that for all $0\neq t\in\mathbb{Q}_p$:
\begin{equation}\Label{0-8.50}
\abs*{\varphi(|t|_p)}\leq\mu|t|_p^{n\alpha-(n+1)\gamma}.
\end{equation}
Then $I^{\alpha}\varphi$ exists, is a radial function, and there exists a constant $C_n$ such that
\begin{equation}\Label{0-8.51}
\abs*{(I^{\alpha}\varphi)(|t|_p)}\leq C_n\mu|t|_p^{(n+1)(\alpha-\gamma)}.
\end{equation}
\end{proposition}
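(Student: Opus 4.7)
The plan is to mimic the proof of Proposition \ref{pr2} verbatim, replacing the exponent $-\gamma$ by $n\alpha-(n+1)\gamma$, and then applying Lemma \ref{pr1} with a shifted parameter $\sigma$. The key algebraic identity to keep in mind is
\[
\alpha(\sigma+1)=(n+1)(\alpha-\gamma)\quad\text{when}\quad \sigma=n-(n+1)\tfrac{\gamma}{\alpha},
\]
so the choice $\sigma=n-(n+1)\gamma/\alpha$ (for $\alpha\neq 1$), respectively $\sigma=n-(n+1)\gamma$ (for $\alpha=1$), is exactly what is needed to reproduce the right-hand power $|t|_p^{(n+1)(\alpha-\gamma)}$.

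First I would check the hypotheses \eqref{2-7}--\eqref{2-8} of Lemma \ref{lemma2} for $\varphi$. Using \eqref{0-8.50}, the series for $\alpha>1$ is bounded by $\mu\sum_{k\le m}p^{k(1-\gamma+n(\alpha-\gamma))}$, which converges since $\gamma<1$ and $\alpha>\gamma$; for $0<\alpha<1$ the exponent becomes $(n+1)(\alpha-\gamma)>0$; for $\alpha=1$ one gets $\sum |k|p^{k(n+1)(1-\gamma)}$, convergent because $(n+1)(1-\gamma)>0$. Thus $I^\alpha\varphi$ exists as a radial function and is given by the formula from Lemma \ref{lemma2}.

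Next I split the resulting expression into the boundary term and the integral term. The boundary term is estimated trivially:
\[
\bigl|p^{-\alpha}|t|_p^{\alpha}\varphi(|t|_p)\bigr|\le p^{-\alpha}\mu\,|t|_p^{(n+1)(\alpha-\gamma)},
\]
and analogously $p^{-1}\mu|t|_p^{(n+1)(1-\gamma)}$ in the case $\alpha=1$. For the integral term, substituting the bound \eqref{0-8.50} leaves exactly $I_{\alpha,\sigma}$ from Lemma \ref{pr1} with $\sigma$ as chosen above. The step that requires a small verification is the admissibility condition $\sigma>\max(-1/\alpha,-1)$: for $\alpha>1$ it amounts to $(n+1)\gamma<n\alpha+1$, which follows from $\gamma<1\le\alpha$; for $0<\alpha<1$ it reduces to $\alpha>\gamma$; for $\alpha=1$ it reduces to $\gamma<1$. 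Hence Lemma \ref{pr1} applies and yields $I_{\alpha,\sigma}=d_{\alpha,\sigma}|t|_p^{(n+1)(\alpha-\gamma)}$.

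Adding the two contributions one obtains \eqref{0-8.51} with
\[
C_n=p^{-\alpha}+\frac{1-p^{-\alpha}}{1-p^{\alpha-1}}\,d_{\alpha,\sigma}\quad(\alpha\neq 1),\qquad C_n=p^{-1}+\frac{1-p}{p\log p}\,d_{1,\sigma}\quad(\alpha=1).
\]
I do not anticipate any genuine obstacle: the proof is essentially a bookkeeping extension of Proposition \ref{pr2}, the only point needing care being the case analysis that places $\sigma$ in the admissible range of Lemma \ref{pr1}. (If one wanted a uniform $n$-independent bound on $C_n$, the uniform estimate \eqref{2-6} could be invoked, since $\sigma\to+\infty$ with $n$ makes $p^{-\alpha\sigma}$ decay; but the present statement only requires existence of some $C_n$.)
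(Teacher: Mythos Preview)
Your proposal is correct and follows essentially the same route as the paper's own proof: verify the hypotheses \eqref{2-7}--\eqref{2-8} of Lemma~\ref{lemma2} case by case, split the resulting formula into the boundary term $p^{-\alpha}|t|_p^{\alpha}\varphi(|t|_p)$ and the integral term, and then apply Lemma~\ref{pr1} with $\sigma=n-(n+1)\gamma/\alpha$ (respectively $\sigma=n-(n+1)\gamma$ when $\alpha=1$), checking the admissibility $\sigma>\max(-\tfrac{1}{\alpha},-1)$ exactly as you indicate. The resulting constants $C_n$ coincide with those in the paper (cf.\ \eqref{Cn}); your final parenthetical remark about a uniform bound on $C_n$ is precisely what the paper does separately in Proposition~\ref{pr4}.
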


\begin{proof}
For $\alpha>1$ we have
$$\sum\limits_{k=-\infty}^m\max\,(p^k,p^{\alpha k})|\varphi(p^k)|\leq\mu\sum\limits_{k=-\infty}^mp^kp^{[n\alpha-(n+1)\gamma]k}<+\infty,$$
since $\gamma<1=\frac{n+1}{n+1}<\frac{n\alpha+1}{n+1}$, and hence $1+n\alpha-(n+1)\gamma>0$. If $0<\alpha<1$, we have
$$\sum\limits_{k=-\infty}^m\max\,(p^k,p^{\alpha k})|\varphi(p^k)|\leq\mu\sum\limits_{k=-\infty}^mp^{\alpha k}p^{[n\alpha-(n+1)\gamma]k}<+\infty,$$
since $\alpha>\gamma$, and thus $(n+1)\alpha-(n+1)\gamma>0$. For $\alpha=1$ we have
\begin{multline}\nonumber
\sum\limits_{k=-\infty}^m|k|p^k|\varphi(p^k)|\leq\mu\sum\limits_{k=-\infty}^m|k|p^kp^{[n\alpha-(n+1)\gamma]k}=\\
\nonumber
=\mu\sum\limits_{k=-\infty}^m|k|\,p^{[1+n-(n+1)\gamma]k}<+\infty,
\end{multline}
since $\gamma<1$.

Therefore the conditions \eqref{2-7}-\eqref{2-8} are satisfied, and by Lemma \ref{lemma2} $I^{\alpha}\varphi$ exists. For $\alpha\neq1$ we have
\begin{multline}\nonumber
\abs*{(I^{\alpha}\varphi)(|t|_p)}=\abs*{p^{-\alpha}|t|_p^{\alpha}\varphi(|t|_p)+\frac{1-p^{-\alpha}}{1-p^{\alpha-1}}\int\limits_{|y|_p<|t|_p}\left(|t|_p^{\alpha-1}-|y|_p^{\alpha-1}\right)\varphi(|y|_p)dy}\leq\\
\nonumber
\leq p^{-\alpha}\mu|t|_p^{\alpha}|t|_p^{n\alpha-(n+1)\gamma}+\frac{1-p^{-\alpha}}{1-p^{\alpha-1}}\mu\int\limits_{|y|_p<|t|_p}\abs*{|t|_p^{\alpha-1}-|y|_p^{\alpha-1}}|y|_p^{n\alpha-(n+1)\gamma}dy=\\
\nonumber
=p^{-\alpha}\mu|t|_p^{(n+1)(\alpha-\gamma)}+\frac{1-p^{-\alpha}}{1-p^{\alpha-1}}\mu I_{\alpha,\frac{n\alpha-(n+1)\gamma}{\alpha}}.
\end{multline}
To apply Lemma \ref{pr1} with $\sigma=\frac{n\alpha-(n+1)\gamma}{\alpha}$, let us verify that $\sigma>\max(-\frac{1}{\alpha},-1)$. Indeed, $(n+1)(\alpha-\gamma)>0$, so $n\alpha-(n+1)\gamma>-1$, that is $\frac{n\alpha-(n+1)\gamma}{\alpha}>-1$. If $\alpha>1$, we have $\gamma<1=\frac{n+1}{n+1}<\frac{n\alpha+1}{n+1}$, and if $0<\alpha<1$, then $\gamma<\alpha=\frac{n\alpha+\alpha}{n+1}<\frac{n\alpha+1}{n+1}$, hence in both cases $\frac{n\alpha-(n+1)\gamma}{\alpha}>-\frac{1}{\alpha}$. Therefore
$$\abs*{(I^{\alpha}\varphi)(|t|_p)}\leq C_n\mu|t|_p^{(n+1)(\alpha-\gamma)},$$

where $C_n=p^{-\alpha}+\frac{1-p^{-\alpha}}{1-p^{\alpha-1}}d_{\alpha,\frac{n\alpha-(n+1)\gamma}{\alpha}}$.

For $\alpha=1$ we have $(n+1)(1-\gamma)>0$, therefore $n-(n+1)\gamma>-1$, and for $\sigma=n-(n+1)\gamma$ by Lemma \ref{pr1} we have
\begin{multline}\nonumber
\abs*{(I^1\varphi)(|t|_p)}=\abs*{p^{-1}|t|_p\varphi(|t|_p)+\frac{1-p}{p\log p}\int\limits_{|y|_p<|t|_p}\left(\log|t|_p-\log|y|_p\right)\varphi(|y|_p)dy}\leq\\
\nonumber
\leq p^{-1}\mu|t|_p^{(n+1)(1-\gamma)}+\frac{1-p}{p\log p}\mu I_{1,n-(n+1)\gamma}=C_n\mu|t|_p^{1-\gamma},
\end{multline}
where 
\begin{equation}\Label{Cn}\ 
C_n=p^{-1}+\frac{1-p}{p\log p}d_{1,n-(n+1)\gamma}.
\end{equation}
\end{proof}

\begin{proposition}\label{pr4}
Let $\gamma<\min(1,\alpha)$. Then the sequence of constants $\{C_n:\;n\geq1\}$ defined in \eqref{Cn} is bounded.
\end{proposition}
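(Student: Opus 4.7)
The plan is to apply Lemma \ref{pr1} uniformly in $n\ge 1$. Denote by $\sigma_n$ the index of $d_{\alpha,\cdot}$ appearing in the definition of $C_n$: for $\alpha\neq 1$
$$\sigma_n=\frac{n\alpha-(n+1)\gamma}{\alpha}=n\Bigl(1-\frac{\gamma}{\alpha}\Bigr)-\frac{\gamma}{\alpha},$$
and $\sigma_n=n(1-\gamma)-\gamma$ for $\alpha=1$. Because $\gamma<\alpha$, the coefficient of $n$ is strictly positive, so the sequence $\{\sigma_n\}_{n\ge1}$ is strictly increasing and attains its infimum at $n=1$.

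The first step is to verify the strict inequality $\sigma_1>\max(-\tfrac{1}{\alpha},-1)$, which will permit the choice of an $\varepsilon>0$ (independent of $n$) with $\sigma_n\ge\max(-\tfrac{1}{\alpha},-1)+\varepsilon$ for every $n\ge 1$. A short case split handles this. If $\alpha>1$, then $\max(-\tfrac{1}{\alpha},-1)=-\tfrac{1}{\alpha}$, and $\sigma_1=1-\tfrac{2\gamma}{\alpha}>-\tfrac{1}{\alpha}$ rewrites as $\gamma<\tfrac{\alpha+1}{2}$, which follows from $\gamma<1$. If $0<\alpha\le 1$, then $\max(-\tfrac{1}{\alpha},-1)=-1$, and $\sigma_1>-1$ reduces to $\gamma<\alpha$, which is the standing hypothesis.

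With such an $\varepsilon$ fixed, estimate \eqref{2-6} of Lemma \ref{pr1} furnishes a constant $A>0$, depending only on $\varepsilon$, $\alpha$ and $p$, for which
$$d_{\alpha,\sigma_n}\le Ap^{-\alpha\sigma_n}=Ap^{\gamma-n(\alpha-\gamma)}\qquad(\alpha\neq 1),$$
and analogously $d_{1,\sigma_n}\le Ap^{-\sigma_n}=Ap^{\gamma-n(1-\gamma)}$ when $\alpha=1$. Since $\alpha>\gamma$, the factor $p^{-n(\alpha-\gamma)}$ is bounded (in fact tends to $0$) as $n$ grows, so $d_{\alpha,\sigma_n}\le Ap^{\gamma}$ for all $n\ge 1$. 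Substituting into the formula for $C_n$ (and its $\alpha=1$ analogue \eqref{Cn}) immediately yields the required uniform bound.

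The only real subtlety is the first step: the strict inequality $\sigma_1>\max(-\tfrac{1}{\alpha},-1)$ is exactly what would fail were the hypothesis weakened to $\gamma\le\min(1,\alpha)$, and this is precisely where the ``weak degeneration'' assumption gets used. Once this lower bound is secured, the remainder is a routine invocation of the exponential decay estimate from Lemma \ref{pr1}.
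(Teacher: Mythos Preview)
Your proof is correct and follows essentially the same approach as the paper: both establish a uniform positive lower bound on $\sigma_n-\max(-\tfrac{1}{\alpha},-1)$ and then invoke the exponential estimate \eqref{2-6} from Lemma~\ref{pr1} to bound $d_{\alpha,\sigma_n}$ by $Ap^{\gamma}$. The only cosmetic difference is that the paper takes $-\gamma/\alpha$ (the $n=0$ value) as the uniform lower bound for $\sigma_n$, whereas you use $\sigma_1$; the resulting bound on $C_n$ is identical.
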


\begin{proof}
Since for all $n\geq 0$ we have $\frac{n\alpha-(n+1)\gamma}{\alpha}>\max(-\frac{1}{\alpha},-1)$, and also $\frac{n\alpha-(n+1)\gamma}{\alpha}=\frac{n(\alpha-\gamma)-\gamma}{\alpha}\geq-\frac{\gamma}{\alpha}>\max(-\frac{1}{\alpha},-1)$, then for $\alpha\neq1$ we can apply Lemma \ref{pr1}, then from \eqref{2-6} for $\sigma=\frac{n\alpha-(n+1)\gamma}{\alpha}$ we obtain for any $n\geq0$
\begin{multline}\nonumber
C_n=p^{-\alpha}+\frac{1-p^{-\alpha}}{1-p^{\alpha-1}}d_{\alpha,\frac{n\alpha-(n+1)\gamma}{\alpha}}\leq p^{-\alpha}+\frac{1-p^{-\alpha}}{1-p^{\alpha-1}}Ap^{n(\gamma-\alpha)+\gamma}\leq\\
\nonumber
\leq p^{-\alpha}+\frac{1-p^{-\alpha}}{1-p^{\alpha-1}}Ap^{\gamma}\eqqcolon C
\end{multline}
and for $\alpha=1$:
\begin{multline}\nonumber
C_n=p^{-1}+\frac{1-p}{p\log p}d_{1,n-(n+1)\gamma}\leq p^{-1}+\frac{1-p}{p\log p}Ap^{n(\gamma-1)+\gamma}\leq\\
\nonumber
\leq p^{-1}+\frac{1-p}{p\log p}Ap^{\gamma}\eqqcolon C.
\end{multline}
\end{proof}

Let us denote
\begin{equation}\Label{00-8}
\widetilde{f}(|t|_p,u(|t|_p))=|t|_p^{-\gamma}f(|t|_p,u(|t|_p)),\;0\neq t\in\mathbb{Q}_p.
\end{equation}
Then the function $\widetilde{f}$ satisfies:
\begin{equation}\Label{00-5}\
|\widetilde{f}(|t|_p,x)|\leq M|t|_p^{-\gamma},\mbox{ for all }t\in\mathbb{Q}_p,\;x\in\mathbb{R},
\end{equation}
\begin{equation}\Label{00-6}\
|\widetilde{f}(|t|_p,x)-\widetilde{f}(|t|_p,y)|\leq F|x-y||t|_p^{-\gamma},\mbox{ for all }t\in\mathbb{Q}_p,\;x,y\in\mathbb{R},
\end{equation}
and the problem \eqref{0-1} is equivalent to 
\begin{equation}\Label{0-3}\
(D^{\alpha}u)(|t|_p)=\widetilde{f}(|t|_p,u(|t|_p)),\;0\neq t\in\mathbb{Q}_p,
\end{equation}

The next result deals with the existence and uniqueness of local mild solutions.

\begin{theorem}\label{th1}
Suppose that the conditions \eqref{0-5}, \eqref{0-6} and $\gamma<\min(1,\alpha)$ hold. Then the problem \eqref{0-1}-\eqref{0-2} has a unique local mild solution, that is the integral equation \eqref{0-8} has a solution $u(|t|_p)$ defined for $|t|_p\leq p^N$, where $N\in\mathbb{Z}$ is sufficiently negative, and any other solution $\overline{u}(|t|_p)$, if it exists, coincides with $u$ for $|t|_p\leq p^K$, where $K\leq N$.
\end{theorem}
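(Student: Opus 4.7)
The plan is to recast the integral equation \eqref{0-8} as a fixed-point problem and apply the Banach contraction principle on a small $p$-adic neighborhood of $0$. For $N\in\mathbb{Z}$, let $X_N$ denote the Banach space of bounded radial functions $u$ on $\{|t|_p\leq p^N\}$ with $u(0)=u_0$, equipped with the supremum norm $\|u\|_N=\sup_{|t|_p\leq p^N}|u(|t|_p)|$, and let $\mathcal{B}_N(r)=\{u\in X_N:\|u-u_0\|_N\leq r\}$. Define the operator
$$T[u](|t|_p)=u_0+I^{\alpha}\bigl[\widetilde{f}(|\cdot|_p,u(|\cdot|_p))\bigr](|t|_p),$$
with $\widetilde{f}$ as in \eqref{00-8}. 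The task is to choose $N$ so that $T$ is a self-map and contraction on some $\mathcal{B}_N(r)$, and then invoke Banach's theorem.

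The self-mapping property follows from Proposition \ref{pr2} applied to $\varphi=\widetilde{f}(|\cdot|_p,u(|\cdot|_p))$, which is radial and satisfies $|\varphi(|t|_p)|\leq M|t|_p^{-\gamma}$ by \eqref{00-5}. Hence $I^{\alpha}\varphi$ exists and
$$|T[u](|t|_p)-u_0|=|(I^{\alpha}\varphi)(|t|_p)|\leq C_0M|t|_p^{\alpha-\gamma}\leq C_0Mp^{N(\alpha-\gamma)}$$
for every $|t|_p\leq p^N$. For the contraction estimate, the pointwise Lipschitz bound \eqref{00-6} gives
$$|\widetilde{f}(|t|_p,u(|t|_p))-\widetilde{f}(|t|_p,v(|t|_p))|\leq F|t|_p^{-\gamma}\|u-v\|_N$$
for $u,v\in\mathcal{B}_N(r)$, so a second application of Proposition \ref{pr2} with $\mu=F\|u-v\|_N$ yields
$$\|T[u]-T[v]\|_N\leq C_0Fp^{N(\alpha-\gamma)}\|u-v\|_N.$$
Since $\alpha-\gamma>0$, choosing $N$ sufficiently negative simultaneously makes $C_0Mp^{N(\alpha-\gamma)}\leq r$ and $C_0Fp^{N(\alpha-\gamma)}\leq 1/2$. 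Thus $T:\mathcal{B}_N(r)\to\mathcal{B}_N(r)$ is a contraction and admits a unique fixed point, which is the required local mild solution.

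For uniqueness across all candidate mild solutions, let $\overline{u}$ be any solution of \eqref{0-8} on a $p$-adic neighborhood of $0$. Writing $\overline{u}(|t|_p)-u_0=(I^{\alpha}\widetilde{f}(|\cdot|_p,\overline{u}(|\cdot|_p)))(|t|_p)$ and applying Proposition \ref{pr2} (with $\mu=M$) automatically gives $|\overline{u}(|t|_p)-u_0|\leq C_0M|t|_p^{\alpha-\gamma}$, so $\overline{u}\in\mathcal{B}_K(r)$ on $|t|_p\leq p^K$ for every sufficiently negative $K\leq N$. The contraction then forces $\overline{u}=u$ on that set.

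The only genuine obstacle is ensuring that the singular factor $|t|_p^{-\gamma}$ introduced by rewriting \eqref{0-1} as \eqref{0-3} is tolerated by the smoothing operator $I^{\alpha}$; this is precisely the content of Proposition \ref{pr2} under the hypothesis $\gamma<\min(1,\alpha)$, and it is what makes the contraction factor $C_0Fp^{N(\alpha-\gamma)}$ vanish as $N\to-\infty$. All remaining steps are a routine Banach fixed-point argument.
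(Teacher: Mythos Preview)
Your argument is correct and in fact cleaner than the paper's. Both proofs are, at bottom, contraction-mapping arguments, but the paper carries out the Picard iteration by hand: it builds the sequence $u_k$ from \eqref{0-9} and proves by induction the stepwise estimate
\[
\abs*{u_{k+1}(|t|_p)-u_k(|t|_p)}\leq C^{k+1}MF^k|t|_p^{(k+1)(\alpha-\gamma)},
\]
which requires tracking the increasing exponents $(k+1)(\alpha-\gamma)$ through Propositions \ref{pr3} and \ref{pr4} (and Lemma \ref{pr1}) in addition to Proposition \ref{pr2}. Uniqueness is handled the same way, by iterating the bound on $|u-\overline{u}|$ and letting the power of $|t|_p^{\alpha-\gamma}$ tend to infinity. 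Your route bypasses all of this: a single application of Proposition \ref{pr2} already gives the contraction factor $C_0Fp^{N(\alpha-\gamma)}$, and the abstract Banach theorem then delivers both existence and uniqueness at once, without ever invoking Propositions \ref{pr3}--\ref{pr4}. What the paper's approach buys is the explicit pointwise decay $|u(|t|_p)-u_0|=O(|t|_p^{\alpha-\gamma})$ together with a hierarchy of higher-order estimates that feed into the later sections, but for Theorem \ref{th1} itself your argument is entirely sufficient and more economical.
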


\begin{proof}
We look for a solution of the equation \eqref{0-8} as a limit of the sequence $\{u_k\}$, where the initial value $u_0$ is as in \eqref{0-8}:
\begin{equation}\Label{0-9}\
u_k(|t|_p)=u_0+I^{\alpha}\widetilde{f}(|\cdot|_p,u_{k-1}(|\cdot|_p))(|t|_p).
\end{equation}

We will prove by induction that for all $k\geq0$:
\begin{equation}\Label{0-14}\
\abs*{u_{k+1}(|t|_p)-u_k(|t|_p)}\leq C^{k+1}MF^k|t|_p^{(k+1)(\alpha-\gamma)}.
\end{equation}

Indeed, it follows from the condition \eqref{00-5}, Lemma \ref{pr2} with $\varphi=\widetilde{f},\;\mu=M$ and Proposition \ref{pr4} that
\begin{equation}\Label{0-13}\
\abs*{u_1(|t|_p)-u_0}=\abs*{I^{\alpha}\widetilde{f}(|\cdot|_p,u_0)(|t|_p)}\leq C_0M|t|_p^{\alpha-\gamma}\leq CM|t|_p^{\alpha-\gamma}.
\end{equation}
Hence
$$\abs*{u_2(|t|_p)-u_1(|t|_p)}=I^{\alpha}\left[\widetilde{f}(|\cdot|_p,u_1(|\cdot|_p))-\widetilde{f}(|\cdot|_p,u_0)\right](|t|_p),$$
where we used that \eqref{00-6} and \eqref{0-13} imply
$$\abs*{\widetilde{f}(|t|_p,u_1(|t|_p))-\widetilde{f}(|t|_p,u_0)}\leq F|t|_p^{-\gamma}\abs*{u_1(|t|_p)-u_0}\leq CMF|t|_p^{\alpha-2\gamma}.$$
Then it follows from Proposition \ref{pr3} for $n=1,\;\varphi=\widetilde{f}(|\cdot|_p,u_1(|\cdot|_p))-\widetilde{f}(|\cdot|_p,u_0)$, $\mu=CMF$ that
\begin{multline}\nonumber
\abs{u_2(|t|_p)-u_1(|t|_p)}=\abs*{I^{\alpha}\left[\widetilde{f}(|\cdot|_p,u_1(|\cdot|_p))-\widetilde{f}(|\cdot|_p,u_0)\right]}\leq\\
\nonumber
\leq C_1CMF|t|_p^{2(\alpha-\gamma)}\leq C^2MF|t|_p^{2(\alpha-\gamma)}.
\end{multline}

Suppose that \eqref{0-14} holds for $\abs*{u_{k+1}(|t|_p)-u_k(|t|_p)}$. Then
$$\abs*{u_{k+2}(|t|_p)-u_{k+1}(|t|_p)}=\abs*{I^{\alpha}\left[\widetilde{f}(|\cdot|_p,u_{k+1}(|\cdot|_p))-\widetilde{f}(|\cdot|_p,u_k(|\cdot|_p))\right](|t|_p)},$$
and it follows from \eqref{00-6} and the induction hypothesis that
\begin{multline}\nonumber
\abs*{\widetilde{f}(|t|_p,u_{k+1}(|t|_p))-\widetilde{f}(|t|_p,u_k(|t|_p))}\leq F|t|_p^{-\gamma}\abs*{u_{k+1}(|t|_p)-u_k(|t|_p)}\leq\\
\nonumber
\leq C^{k+1}MF^{k+1}|t|_p^{(k+1)(\alpha-\gamma)}|t|_p^{-\gamma}=C^{k+1}MF^{k+1}|t|_p^{(k+1)\alpha-(k+2)\gamma}.
\end{multline}
Hence by Proposition \ref{pr3}, applied to the function 
$$\varphi=\widetilde{f}(|\cdot|_p,u_{k+1}(|\cdot|_p))-\widetilde{f}(|\cdot|_p,u_k(|\cdot|_p)),$$
 we have
$$\abs*{u_{k+2}(|t|_p)-u_{k+1}(|t|_p)}\leq C^{k+2}MF^{k+1}|t|_p^{(k+2)(\alpha-\gamma)}.$$

Therefore the sequence $\{u_k\}$ converges uniformly on the ball $|t|_p\leq T$, for sufficiently small $T$, to the limit which we denote by $u(|t|_p)$. It follows from \eqref{2-7}, \eqref{2-8} that
$$I^{\alpha}\widetilde{f}(|\cdot|_p,u_k(|\cdot|_p))(|t|_p)\to I^{\alpha}\widetilde{f}(|\cdot|_p,u(|\cdot|_p))(|t|_p),\;|t|_p\leq T,$$
so that $u$ is a local solution to the equation \eqref{0-8}.

Let us prove the uniqueness of this solution. Suppose that there exists another local solution $\overline{u}(|t|_p)$. Then by Proposition \ref{pr2} we have
$$|u(|t|_p)-\overline{u}(|t|_p)|\leq2MC|t|_p^{\alpha-\gamma}.$$
On the other hand,
\begin{multline}\nonumber
u(|t|_p)-\overline{u}(|t|_p)=I^{\alpha}\widetilde{f}(|\cdot|_p,u(|\cdot|_p))(|t|_p)-I^{\alpha}\widetilde{f}(|\cdot|_p,\overline{u}(|\cdot|_p))(|t|_p)=\\
\nonumber
=I^{\alpha}\left\{\widetilde{f}(|\cdot|_p,u(|\cdot|_p))-\widetilde{f}(|\cdot|_p,\overline{u}(|\cdot|_p))\right\}(|t|_p),
\end{multline}
and it follows from \eqref{00-6} and Proposition \ref{pr3} that
$$|u(|t|_p)-\overline{u}(|t|_p)|\leq2MC^2F|t|_pI^{2(\alpha-\gamma)}.$$
We show by induction that
$$|u(|t|_p)-\overline{u}(|t|_p)|\leq2MC^mF^{m-1}|t|_p^{m(\alpha-\gamma)}.$$
Suppose that the statement holds for some $m$. We have by \eqref{00-6}
\begin{multline}\nonumber
\abs*{\widetilde{f}(|t|_p,u(|t|_p)-\widetilde{f}(|t|_p,\overline{u}(|t|_p)}\leq\\
\nonumber 
\leq|t|_p^{-\gamma}F\big|u (|t|_p)-\overline{u}(|t|_p)\big|\leq 2F^mMC^m|t|_p^{m\alpha-(m+1)\gamma},
\end{multline}
hence it follows from Proposition \ref{pr3} that
$$|u(|t|_p)-\overline{u}(|t|_p)|\leq 2MC^{m+1}F^m|t|_p^{(m+1)(\alpha-\gamma)}.$$

For $|t|_p<1$, by letting $m\to\infty$, we obtain that $u(|t|_p)=\overline{u}(|t|_p)$.
\end{proof}

\section{Extension of solutions}

Suppose that $\gamma<\min(1,\alpha)$, the function $f$ satisfies the conditions \eqref{0-5} and \eqref{0-6}, and $\widetilde{f}$ is defined in \eqref{00-8}. Then by Theorem \ref{th1} we can construct a local solution $u(|t|_p),\;|t|_p\leq p^N,\;N\in\mathbb{Z}$, to the integral equation \eqref{0-8}. Let $\alpha\neq1$. We can apply Lemma \ref{lemma2} to the function $\widetilde{f}$ and substitute the explicit expression for $I^{\alpha}\widetilde{f}$ into the equation \eqref{0-8}. Therefore, in order to find a solution for $|t|_p=p^{N+1}$, we have to solve the equation
\begin{equation}\Label{0-15}\
u(p^{N+1})=u_0+v_0^{(N)}+p^{\alpha N}\widetilde{f}(p^{N+1},u(p^{N+1})),
\end{equation}
where
\begin{equation}\Label{0-16}\
v_0^{(N)}=\frac{1-p^{-\alpha}}{1-p^{\alpha-1}}\int\limits_{|y|_p\leq p^N}\left(p^{(N+1)(\alpha-1)}-|y|_p^{\alpha-1}\right)\widetilde{f}(|y|_p,u(|y|_p))dy
\end{equation}
is a known constant.

Likewise, for $\alpha=1$:
\begin{equation}\Label{0-151}\
u(p^{N+1})=u_0+v_0^{(N)}+p^N\widetilde{f}(p^{N+1},u(p^{N+1})),
\end{equation}
\begin{equation}\Label{0-161}\
v_0^{(N)}=\frac{1-p}{p\log p}\int\limits_{|y|_p\leq p^N}\left(\log p^{N+1}-\log|y|_p\right)\widetilde{f}(|y|_p,u(|y|_p))dy.
\end{equation}

\begin{theorem}\label{th2}
Suppose that $\gamma<\min(1,\alpha)$, the function $f$ satisfies the conditions \eqref{0-5} and \eqref{0-6} in the following form:
\begin{equation}\Label{0-19}\
|f(p^{\ell},x)-f(p^{\ell},y)|\leq F_{\ell}|x-y|,\;x,y\in\mathbb{R},\;\ell\in\mathbb{Z},
\end{equation}
where $0<F_{\ell}<p^{-\alpha \ell}$ for all $\ell\in\mathbb{Z}$.
Then the local solution to \eqref{0-8} can be extended to a global solution defined for all $t\in\mathbb{Q}_p$.
\end{theorem}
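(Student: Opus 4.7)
The plan is to extend $u$ one $p$-adic ``shell'' at a time. Suppose inductively that $u$ has been defined on the ball $|t|_p\leq p^M$ for some $M\geq N$ and that \eqref{0-8} holds there; the task is to determine the single new value $u(p^{M+1})$ so that \eqref{0-8} continues to hold on $|t|_p\leq p^{M+1}$. Applying Lemma~\ref{lemma2} to $I^{\alpha}\widetilde{f}$ at $|x|_p=p^{M+1}$ separates off the unknown boundary contribution $p^{-\alpha}|x|_p^{\alpha}\widetilde{f}(p^{M+1},u(p^{M+1}))=p^{\alpha M}\widetilde{f}(p^{M+1},u(p^{M+1}))$ from the integral over $|y|_p<p^{M+1}$, which depends only on already-known values of $u$. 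This yields precisely the scalar equation \eqref{0-15} (with $N$ replaced by $M$) for $\alpha\neq 1$, or \eqref{0-151} for $\alpha=1$, in which $v_0^{(M)}$ is a known real constant; its absolute convergence follows from $|\widetilde{f}(|y|_p,u(|y|_p))|\leq M|y|_p^{-\gamma}$ combined with $\gamma<\min(1,\alpha)$ and the elementary integrals \eqref{0-11}, \eqref{0-33}.

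Once reduced to this scalar equation, the plan is to apply the Banach contraction principle in $\mathbb{R}$. Writing the right-hand side of \eqref{0-15} as the map $T(w)=u_0+v_0^{(M)}+p^{\alpha M}\widetilde{f}(p^{M+1},w)$, its Lipschitz constant equals $p^{\alpha M-\gamma(M+1)}F_{M+1}$, which by the hypothesis $F_{M+1}<p^{-\alpha(M+1)}$ is strictly dominated by $p^{-\alpha-\gamma(M+1)}$. I would then verify that this quantity is strictly less than $1$ at every step of the induction, so $T$ is a genuine contraction and admits a unique fixed point, which is taken as $u(p^{M+1})$. The case $\alpha=1$ is treated identically using \eqref{0-151} with the coefficient $p^{M}$ in place of $p^{\alpha M}$, and with \eqref{0-161} replacing \eqref{0-16}.

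Iterating for $M=N, N+1, N+2,\ldots$ produces $u$ on every shell $|t|_p=p^{\ell}$, $\ell>N$, and hence on all of $\mathbb{Q}_p$. By construction the extended function satisfies the integral equation \eqref{0-8} pointwise at each $t\neq 0$, so it is a global mild solution in the sense of the preceding definition.

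The main obstacle I expect is the bookkeeping needed to confirm the strict inequality $p^{-\alpha-\gamma(M+1)}<1$ across the full induction range; this is exactly where the shell-dependent Lipschitz hypothesis $F_\ell<p^{-\alpha\ell}$ is used sharply, in tandem with $\gamma<\min(1,\alpha)$. Verifying absolute convergence of the successive integrals $v_0^{(M)}$ (which amounts to repeated use of the a priori bound on $\widetilde{f}$ and of Lemma~\ref{pr1} at $\sigma=-\gamma/\alpha$, resp.\ $\sigma=-\gamma$) and the parallel handling of the $\alpha=1$ case are routine.
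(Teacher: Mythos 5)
Your proposal is correct and takes essentially the same route as the paper: extend the solution shell by shell, use Lemma \ref{lemma2} to isolate the unknown value $u(p^{\ell+1})$ in the scalar equations \eqref{0-15}, \eqref{0-151}, and obtain it from the Banach fixed point theorem, the contraction property coming from $F_{\ell+1}<p^{-\alpha(\ell+1)}$ together with the factor $|t|_p^{-\gamma}$ and $\gamma<\alpha$. Your Lipschitz bound $p^{-\alpha-\gamma(M+1)}$ is even slightly sharper than the paper's $p^{\gamma-\alpha}$, and the bookkeeping issue you flag for very negative shells is not treated differently in the paper, whose verification is likewise carried out for $\ell\geq 0$.
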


\begin{proof}
For all $\ell\geq0$ we have
$$p^{\alpha \ell}\abs*{\widetilde{f}(p^{\ell+1},x)-\widetilde{f}(p^{\ell+1},y)}\leq p^{\alpha \ell}p^{-\alpha(\ell+1)}p^{\gamma}|x-y|=p^{\gamma-\alpha}|x-y|,$$
where $p^{\gamma-\alpha}<1$, since $\gamma<\alpha$, and hence the operator $u_0+v_0^{(\ell)}+p^{\alpha \ell}\widetilde{f}(p^{\ell+1},\cdot)$ is a contraction mapping in the Banach space of continuous functions on the ball $\{|t|_p\leq p^{\ell+1}\}$, and it follows from the Banach fixed point theorem that it has a unique fixed point $u(p^{\ell+1})$:
$$u(p^{\ell+1})=u_0+v_0^{(\ell)}+p^{\alpha \ell}\widetilde{f}(p^{\ell+1},u(p^{\ell+1})).$$
\end{proof}

\section{Transition from an integral equation to a differential one}

Now we may turn back to the differential form of the initial equation.

Suppose that $f$ satisfies the conditions \eqref{0-5} and \eqref{0-19}. Let $u(|t|_p)$ be a solution obtained by iterations for $|t|_p\leq p^N$ and extended as in \eqref{0-15}. Then for all $\ell\geq N$:
\begin{equation}\Label{0-23}\
u(p^{\ell+1})=u_0+v_0^{(\ell)}+p^{\alpha \ell}\widetilde{f}(p^{\ell+1},u(p^{\ell+1})),
\end{equation}
where $v_0^{(\ell)}$ is defined in \eqref{0-16}, \eqref{0-161}.

\begin{proposition}\label{pr5}
Suppose that the function $f$ satisfies the conditions \eqref{0-19} and
\begin{equation}\Label{0-21}\
|f(p^{\ell},x)|\leq Ap^{-\beta \ell},\;\ell\geq1,\;\forall x\in\mathbb{R},
\end{equation}
where $\beta+\gamma>\alpha$. Then the function $u(|t|_p),\;|t|_p\leq p^{\ell+1}$, which solves the equation \eqref{0-8}, satisfies the condition \eqref{3-1}, and the function $\widetilde{f}$ satisfies the conditions \eqref{3-2}-\eqref{3-3}.
\end{proposition}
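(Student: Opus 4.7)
The verification has two parts: the condition \eqref{3-1} for $u$ itself, which concerns only small $|t|_p$, and the conditions \eqref{3-2}--\eqref{3-3} for $\widetilde{f}$, each of which has a lower tail ($k \to -\infty$) and an upper tail ($\ell \to +\infty$). My plan is to treat the lower tails using the crude bound \eqref{00-5} on $\widetilde{f}$ together with boundedness of $u$ near $0$, and to treat the upper tail using only the new decay hypothesis \eqref{0-21}.

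For the lower tail I would first argue that $|u(p^k)|$ is uniformly bounded for $k$ sufficiently negative. Writing
\[ u(p^k) = u_0 + \sum_{j=0}^{\infty} \bigl( u_{j+1}(p^k) - u_j(p^k) \bigr) \]
and inserting the iteration estimate \eqref{0-14} from the proof of Theorem~\ref{th1}, the $j$-series becomes geometric with ratio $CF \, p^{(\alpha-\gamma)k}$, which is strictly less than $1$ once $k$ is small enough (since $\alpha - \gamma > 0$); this yields $|u(p^k) - u_0| = O(p^{(\alpha-\gamma)k})$. Consequently \eqref{3-1} reduces to the convergent geometric series $\sum_{k \leq m} p^k$. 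For the lower-tail pieces of \eqref{3-2}--\eqref{3-3}, I would substitute $|\widetilde{f}(p^k, u(p^k))| \leq M p^{-\gamma k}$ from \eqref{00-5} and split on the sign of $\alpha - 1$ exactly as at the start of the proof of Proposition~\ref{pr2}: in each of the cases $\alpha > 1$, $\alpha < 1$, $\alpha = 1$, the resulting exponent of $p^k$ is strictly positive thanks to $\gamma < \min(1,\alpha)$, and the series (possibly with an extra $|k|$ factor when $\alpha=1$) converges.

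For the upper tail the hypothesis \eqref{0-21} together with the definition \eqref{00-8} of $\widetilde{f}$ gives at once the pointwise decay
\[ |\widetilde{f}(p^\ell, u(p^\ell))| \leq A\, p^{-(\beta+\gamma)\ell}, \qquad \ell \geq 1, \]
uniformly in $u(p^\ell)$, since \eqref{0-21} is stated uniformly in $x$. Because $\beta + \gamma > \alpha > 0$, both $\sum_{\ell \geq m} p^{-(\beta+\gamma)\ell}$ and its $\ell$-weighted counterpart converge as geometric series, yielding the upper halves of \eqref{3-2} and \eqref{3-3} respectively.

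The main obstacle — modest but worth stating carefully — is the first step: deducing uniform boundedness of $u(p^k)$ near $0$ from \eqref{0-14}. The estimate \eqref{0-14} controls \emph{each} successive difference $u_{j+1} - u_j$, but one must sum these over $j$ and verify that the resulting geometric ratio is $< 1$ for $k$ sufficiently negative; this is where the positivity of $\alpha - \gamma$ is used a second time. Once that is in hand, everything else reduces to straightforward geometric summation, with the convergence exponents matched respectively to the hypotheses $\gamma < \min(1,\alpha)$ (lower tails) and $\beta + \gamma > \alpha$ (upper tails).
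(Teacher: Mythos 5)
The parts you prove are correct and essentially coincide with the paper's argument: boundedness of $u$ near $0$ (the paper gets it from continuity of the uniform limit of the iterations on the ball $B_N$ rather than by resumming \eqref{0-14}, but it is the same fact), the lower tails of \eqref{3-2}--\eqref{3-3} from $|\widetilde f(p^k,\cdot)|\le Mp^{-\gamma k}$ with the same case split $\alpha>1$, $\alpha<1$, $\alpha=1$ as in Proposition \ref{pr2}, and the upper tails of \eqref{3-2}--\eqref{3-3} from $|\widetilde f(p^\ell,\cdot)|\le Ap^{-(\beta+\gamma)\ell}$.

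However, there is a genuine gap: you never verify the condition \eqref{3-1'} for $u$, namely $\sum_{\ell\ge m}p^{-\alpha\ell}|u(p^\ell)|<\infty$, and this is the main technical content of the paper's proof. Your opening claim that the condition on $u$ ``concerns only small $|t|_p$'' takes the statement's reference to \eqref{3-1} literally, but that reference is a slip of the paper: the proposition exists precisely to supply the hypotheses of Lemma \ref{lemma1} (together with Lemma \ref{lemma3}) so that $D^{\alpha}u$ and $D^{\alpha}I^{\alpha}\widetilde f$ exist in Theorem \ref{theorem3}, and Lemma \ref{lemma1} requires both \eqref{3-1} and \eqref{3-1'}; the paper's own proof says explicitly ``Let us check the condition \eqref{3-1'} for every term in \eqref{0-23}''. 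For large $|t|_p$ the solution is not bounded a priori: it is produced by the continuation formula \eqref{0-23}, $u(p^{\ell+1})=u_0+v_0^{(\ell)}+p^{\alpha\ell}\widetilde f\big(p^{\ell+1},u(p^{\ell+1})\big)$, and one must control the growth of each term. The paper does this by splitting $v_0^{(\ell)}$ into $v_{0,1}^{(\ell)}+v_{0,2}^{(\ell)}$ and estimating them (cases $\alpha>1$; $0<\alpha<1$ with $\beta+\gamma>1$ or $\beta+\gamma\le1$; $\alpha=1$), obtaining growth of order $p^{(\ell+1)(\alpha-1)}$, $p^{(\ell+1)(\alpha-\beta-\gamma)}$ or $(\ell+1)p^{\ell(1-\gamma)}$, and by bounding $p^{\alpha\ell}\widetilde f(p^{\ell+1},\cdot)$ via \eqref{0-21}; only after these bounds does the weighted series $\sum p^{-\alpha\ell}|u(p^\ell)|$ converge. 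A telltale sign that this content is missing from your proposal is that the hypothesis $\beta+\gamma>\alpha$ is never used at full strength (your upper-tail estimates for $\widetilde f$ only need $\beta+\gamma>0$), whereas in the paper it is exactly what makes the \eqref{3-1'} sums finite.
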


\begin{proof}
Let $\alpha\neq1$.
For $\ell\geq1$ we denote $v_0^{(\ell)}=v_{0,1}^{(\ell)}+v_{0,2}^{(\ell)}$, where
$$v_{0,1}^{(\ell)}=\frac{1-p^{-\alpha}}{1-p^{\alpha-1}}\int\limits_{|y|_p\leq 1}\left(p^{(\ell+1)(\alpha-1)}-|y|_p^{\alpha-1}\right)\widetilde{f}(|y|_p,u(|y|_p))dy,$$
$$v_{0,2}^{(\ell)}=\frac{1-p^{-\alpha}}{1-p^{\alpha-1}}\int\limits_{p\leq|y|_p\leq p^{\ell}}\left(p^{(\ell+1)(\alpha-1)}-|y|_p^{\alpha-1}\right)\widetilde{f}(|y|_p,u(|y|_p))dy.$$

Then it follows from \eqref{0-5} that
\begin{multline}\nonumber
|v_{0,1}^{(\ell)}|\leq\abs*{\frac{1-p^{-\alpha}}{1-p^{\alpha-1}}}M\int\limits_{|y|_p\leq1}\abs*{p^{(\ell+1)(\alpha-1)}-|y|_p^{\alpha-1}}|y|_p^{-\gamma}dy\leq\\
\nonumber
\leq\abs*{\frac{1-p^{-\alpha}}{1-p^{\alpha-1}}}M\bigg\{p^{(\ell+1)(\alpha-1)}\int\limits_{|y|_p\leq1}|y|_p^{-\gamma}dy+\int\limits_{|y|_p\leq1}|y|_p^{\alpha-1-\gamma}dy\bigg\}=\\
\nonumber
=\abs*{\frac{1-p^{-\alpha}}{1-p^{\alpha-1}}}M\left\{p^{(\ell+1)(\alpha-1)}p\frac{1-p^{-1}}{1-p^{\gamma-1}}+\frac{1-p^{-1}}{1-p^{-\alpha+\gamma}}\right\}\leq C_1\left\{p^{(\ell+1)(\alpha-1)}+1\right\},\\
\end{multline}
where $C_1=\abs*{\frac{1-p^{-\alpha}}{1-p^{\alpha-1}}}M\max\left(\frac{1-p^{-1}}{1-p^{\gamma-1}},\frac{1-p^{-1}}{1-p^{-\alpha+\gamma}}\right)$.

By \eqref{0-21} we have
\begin{multline}\nonumber
|v_{0,2}^{(\ell)}|\leq\\
\nonumber
\leq\abs*{\frac{1-p^{-\alpha}}{1-p^{\alpha-1}}}A\bigg\{p^{(\ell+1)(\alpha-1)}\int\limits_{p\leq|y|_p\leq p^{\ell}}|y|_p^{-(\beta+\gamma)}dy+\int\limits_{p\leq|y|_p\leq p^{\ell}}|y|_p^{\alpha-1-(\beta+\gamma)}dy\bigg\}=\\
\nonumber
=\abs*{\frac{1-p^{-\alpha}}{1-p^{\alpha-1}}}A\bigg\{p^{(\ell+1)(\alpha-1)}\sum\limits_{j=1}^{\ell}\int\limits_{|y|_p=p^j}|y|_p^{-(\beta+\gamma)}dy+\sum\limits_{j=1}^{\ell}\int\limits_{|y|_p=q^j}|y|_p^{\alpha-1-(\beta+\gamma)}dy\bigg\}=\\
\nonumber
=\abs*{\frac{1-p^{-\alpha}}{1-p^{\alpha-1}}}A\bigg\{p^{(\ell+1)(\alpha-1)}\sum\limits_{j=1}^{\ell}p^{-j(\beta+\gamma)}(1-\frac{1}{p})p^j+\sum\limits_{j=1}^{\ell}p^{j(\alpha-1-\beta-\gamma)}(1-\frac{1}{p})p^j\bigg\}=\\
\nonumber
=C_2p^{(\ell+1)(\alpha-1)}\sum\limits_{j=1}^{\ell}p^{(1-\beta-\gamma)j}+C_2\sum\limits_{j=1}^{\ell}p^{(\alpha-\beta-\gamma)j},
\end{multline}
where $C_2=\abs*{\frac{1-p^{-\alpha}}{1-p^{\alpha-1}}}A(1-\frac{1}{p})$.

Suppose additionally that $\alpha>1$. Then, due to assumption of the Theorem, $\beta+\gamma>\alpha>1$ and we have

\begin{equation}\Label{5-3}
p^{(\ell+1)(\alpha-1)}\sum\limits_{j=1}^{\ell}p^{(1-\beta-\gamma)j}=\frac{p^{(\ell+1)(\alpha-1)}p^{1-\beta-\gamma}\left(1-p^{\ell(1-\beta-\gamma)}\right)}{1-p^{1-\beta-\gamma}}\leq C_3p^{(\ell+1)(\alpha-1)},
\end{equation}
where $C_3=\frac{p^{1-\beta-\gamma}}{1-p^{1-\beta-\gamma}}$.

Let $0<\alpha<1$. If $\beta+\gamma>1$, then similarly to \eqref{5-3},
$$p^{(\ell+1)(\alpha-1)}\sum\limits_{j=1}^{\ell}p^{(1-\beta-\gamma)j}\leq C_3p^{(\ell+1)(\alpha-1)},$$
and if $\beta+\gamma\leq1$, then
\begin{multline}\nonumber
p^{(\ell+1)(\alpha-1)}\sum\limits_{j=1}^{\ell}p^{(1-\beta-\gamma)j}=p^{(\ell+1)(\alpha-1)}\frac{p^{1-\beta-\gamma}\left(p^{\ell(1-\beta-\gamma)}-1\right)}{p^{1-\beta-\gamma}-1}=\\
\nonumber
=C_4\left[p^{(\ell+1)(\alpha-\beta-\gamma)}-p^{(\ell+1)(\alpha-1)+1-\beta-\gamma}\right]\leq C_4p^{(\ell+1)(\alpha-\beta-\gamma)},
\end{multline}
where $C_4=\frac{1}{p^{1-\beta-\gamma}-1}$. Moreover, for all $\alpha\neq1$ we have
$$\sum\limits_{j=1}^{\ell}p^{(\alpha-\beta-\gamma)j}=\frac{p^{\alpha-\beta-\gamma}\left(1-p^{\ell(\alpha-\beta-\gamma)}\right)}{1-p^{\alpha-\beta-\gamma}}\leq \frac{p^{\alpha-\beta-\gamma}}{1-p^{\alpha-\beta-\gamma}}=C_5.$$

If $\alpha=1$, we have for $\ell\geq1$:
$$u(p^{\ell+1})=u_0+v_0^{(\ell)}+p^{\ell}\widetilde{f}(p^{\ell+1},u(p^{\ell+1})),$$
it follows from \eqref{0-5} that
\begin{multline}\nonumber
|v_{0,1}^{(\ell)}|=\abs*{\frac{1-p}{p\log p}\int\limits_{|y|_p\leq1}\left(\log p^{\ell+1}-\log|y|_p\right)\widetilde{f}(|y|_p,u(|y|_p))dy}\leq\\
\nonumber
\leq\frac{p-1}{p\log p}M\int\limits_{|y|_p\leq1}|y|_p^{-\gamma}\abs*{\log p^{\ell+1}-\log|y|_p}dy\leq\\
\nonumber
\leq\frac{p-1}{p\log p}M\bigg\{\log p^{\ell+1}\int\limits_{|y|_p\leq1}|y|_p^{-\gamma}dy-\sum\limits_{n=-\infty}^0\int\limits_{|y|_p=p^n}|y|_p^{-\gamma}\log|y|_pdy\bigg\}=\\
\nonumber
=\frac{p-1}{p\log p}M\bigg\{\log p^{\ell+1}\frac{1-p^{-1}}{1-p^{\gamma-1}}-\sum\limits_{n=-\infty}^0p^{-\gamma n}(1-p^{-1})p^n\log p^n\bigg\}=\\
\nonumber
=\frac{p-1}{p\log p}\log p(1-p^{-1})M\bigg\{\frac{\ell+1}{1-p^{\gamma-1}}+\frac{p^{\gamma-1}}{(1-p^{\gamma-1})^2}\bigg\}\leq C_7(\ell+1),
\end{multline}
where the sum $\sum\limits_{n=0}^{\infty}np^{-(1-\gamma)n}$ is calculated due to the identity 0.231.2 in \cite{Grad}. Further,

\begin{multline}\nonumber
|v_{0,2}^{(\ell)}|=\abs*{\frac{1-p}{p\log p}\int\limits_{p\leq|y|_p\leq p^{\ell}}\left(\log p^{\ell+1}-\log|y|_p\right)\widetilde{f}(|y|_p,u(|y|_p))dy}\leq\\
\nonumber
\leq\frac{p-1}{p\log p}M\bigg\{\sum\limits_{j=1}^l\int\limits_{|y|_p=p^j}|y|_p^{-\gamma}\log p^{\ell+1}dy+\sum\limits_{j=1}^{\ell}|y|_p^{-\gamma}\log|y|_p\bigg\}\\
\nonumber
\leq\frac{p-1}{p\log p}M\log p\bigg\{(\ell+1)(1-p^{-1})\sum\limits_{j=1}^{\ell}p^{(1-\gamma)j}+\sum\limits_{j=1}^{\ell}jp^{(1-\gamma)j}\bigg\}\leq\\
\nonumber
\leq C_8(\ell+1)p^{l(1-\gamma)}.
\end{multline}

Suppose that the solution $u$ is obtained in the ball $B_N$ by iterations as in Theorem \ref{th1}. Then for all $m\geq N$ we have
$$\sum\limits_{k=-\infty}^mp^k|u(p^k)|\leq\max\limits_{t\in B_N}|u(|t|_p)|\sum\limits_{k=-\infty}^Np^k+\sum\limits_{k=N+1}^mp^k|u(p^k)|<\infty,$$
since $u$ is continuous on $B_N$. Let us check the condition \eqref{3-1'} for every term in \eqref{0-23}. Let $\alpha\neq1$. Then for all $m\geq 1$ we have
\begin{multline}\nonumber
\sum\limits_{\ell=m}^{\infty}p^{-\alpha \ell}|u_0(p^{\ell})|=|u_0|\sum\limits_{\ell=m}^{\infty}p^{-\alpha \ell}<\infty,\\
\nonumber
\sum\limits_{\ell=m}^{\infty}p^{-\alpha \ell}|v_{0,1}^{(\ell)}|\leq C_1\sum\limits_{\ell=m}^{\infty}p^{-\alpha \ell+(\ell+1)(\alpha-1)}+C_1\sum\limits_{\ell=m}^{\infty}p^{-\alpha \ell}=C_1p^{\alpha-1}\sum\limits_{\ell=m}^{\infty}p^{-\ell}+\\
\nonumber
+C_1\sum\limits_{\ell=m}^{\infty}p^{-\alpha \ell}<\infty.
\end{multline}
Let $\alpha>1$. Then
$$\sum\limits_{\ell=m}^{\infty}p^{-\alpha \ell}|v_{0,2}^{(\ell)}|\leq C_2C_3\sum\limits_{\ell=m}^{\infty}p^{-\alpha \ell}p^{(\ell+1)(\alpha-1)}+C_2C_5\sum\limits_{\ell=m}^{\infty}p^{-\alpha \ell}<\infty.$$
If $0<\alpha<1$ and additionally $\beta+\gamma>1$, then the similar estimate holds, and if $\beta+\gamma\leq1$, then
\begin{multline}\nonumber
\sum\limits_{\ell=m}^{\infty}p^{-\alpha \ell}|v_{0,2}^{(\ell)}|\leq C_2C_4\sum\limits_{\ell=m}^{\infty}p^{-\alpha \ell}p^{(\ell+1)(\alpha-\beta-\gamma)}+C_2C_5\sum\limits_{\ell=m}^{\infty}p^{-\alpha \ell}=\\
\nonumber
=C_2C_4p^{\alpha-\beta-\gamma}\sum\limits_{\ell=m}^{\infty}p^{-(\beta+\gamma)\ell}+C_2C_5\sum\limits_{\ell=m}^{\infty}p^{-\alpha \ell}<\infty.
\end{multline}
If $\alpha=1$, then
$$\sum\limits_{\ell=m}^{\infty}p^{-\alpha \ell}|v_{0,1}^{(\ell)}|\leq C_7\sum\limits_{\ell=m}^{\infty}(\ell+1)p^{-\ell}<\infty,$$
$$\sum\limits_{\ell=m}^{\infty}p^{-\alpha \ell}|v_{0,2}^{(\ell)}|\leq C_8\sum\limits_{\ell=m}^{\infty}(\ell+1)p^{-\alpha \ell}p^{(1-\gamma)\ell}=C_8\sum\limits_{\ell=m}^{\infty}(\ell+1)p^{-\gamma \ell}<\infty.$$

It follows from \eqref{0-21} that
$$\sum\limits_{\ell=m}^{\infty}p^{-\alpha \ell}|f(p^{\ell+1},u(p^{\ell+1}))|\leq A\sum\limits_{\ell=m}^{\infty}p^{(\alpha-(\beta+\gamma))(\ell+1)}<\infty.$$
Hence $u$ satisfies the condition \eqref{3-1}.

Consider now the function $\widetilde{f}$. By Proposition \ref{pr2}, noting \eqref{00-5}, \eqref{00-6}, $\widetilde{f}$ satisfies the first conditions in \eqref{3-2}-\eqref{3-3}. If $\alpha\neq1$, for $m>0$ we have
$$\sum\limits_{\ell=m}^{\infty}|\widetilde{f}(p^{\ell},u(p^{\ell}))|\leq A\sum\limits_{\ell=m}^{\infty}p^{-(\beta+\gamma)\ell}<\infty,$$
and if $\alpha=1$:
$$\sum\limits_{\ell=m}^{\infty}\ell|\widetilde{f}(p^{\ell},u(p^{\ell}))|\leq A\sum\limits_{\ell=m}^{\infty}\ell p^{-(\beta+\gamma)\ell}<\infty.$$
\end{proof}

\begin{theorem}\label{theorem3}
Under the conditions \eqref{0-19}, \eqref{0-21}, the solution to the equation \eqref{0-8}, obtained by iteration process with subsequent continuation, satisfies the equation \eqref{0-1}.
\end{theorem}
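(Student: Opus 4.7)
The plan is to apply the Vladimirov operator $D^\alpha$ to both sides of the integral equation \eqref{0-8} that $u$ satisfies by construction, and then read off \eqref{0-1} using Lemma \ref{lemma3}. Writing $u(|t|_p)=u_0+(I^\alpha\widetilde f(\cdot,u(\cdot)))(|t|_p)$ and using that $D^\alpha$ is linear on the class of radial functions for which both sides are defined, I would like to conclude
$$
(D^\alpha u)(|t|_p)=(D^\alpha u_0)(|t|_p)+\bigl(D^\alpha I^\alpha\widetilde f(\cdot,u(\cdot))\bigr)(|t|_p)=\widetilde f(|t|_p,u(|t|_p)),
$$
and then recover \eqref{0-1} by multiplying through by $|t|_p^\gamma$ and invoking the definition \eqref{00-8} of $\widetilde f$.

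So the proof reduces to two verifications. First, that $D^\alpha$ annihilates the constant $u_0$: the constant function trivially satisfies the summability conditions \eqref{3-1}--\eqref{3-1'} of Lemma \ref{lemma1} since both sums are geometric, and substituting $u\equiv u_0$ into the explicit expression from Lemma \ref{lemma1}, using the definition $d_\alpha=(1-p^\alpha)/(1-p^{-\alpha-1})$ together with the identity $1-p^\alpha=-p^\alpha(1-p^{-\alpha})$, the three summands collapse algebraically to zero for every $|t|_p=p^n$. Second, that Lemma \ref{lemma3} may be applied to $v=\widetilde f(\cdot,u(\cdot))$, which requires the pair of conditions \eqref{3-2} (or \eqref{3-3} when $\alpha=1$). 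But this is precisely what Proposition \ref{pr5} delivers under hypotheses \eqref{0-19} and \eqref{0-21}: the solution $u$ (constructed by the iteration of Theorem \ref{th1} and extended as in Theorem \ref{th2}) satisfies \eqref{3-1}--\eqref{3-1'}, and $\widetilde f(\cdot,u(\cdot))$ satisfies \eqref{3-2}--\eqref{3-3}.

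Once both verifications are in hand, Lemma \ref{lemma3} yields $D^\alpha I^\alpha\widetilde f(\cdot,u(\cdot))=\widetilde f(\cdot,u(\cdot))$ pointwise on $p^{\mathbb Z}$, and adding the vanishing contribution from $u_0$ gives $(D^\alpha u)(|t|_p)=\widetilde f(|t|_p,u(|t|_p))$; multiplication by $|t|_p^\gamma$ produces \eqref{0-1} for every $0\ne t\in\mathbb Q_p$. The substantive work has already been absorbed into Proposition \ref{pr5}, so the only remaining obstacle is the bookkeeping step showing $D^\alpha u_0=0$ in the radial calculus; this is a short direct computation rather than a conceptual difficulty, which is why the theorem can be stated as an essentially immediate corollary of the preceding results.
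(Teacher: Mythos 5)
Your proposal is correct and follows essentially the same route as the paper: apply $D^{\alpha}$ to the integral equation \eqref{0-8}, use Proposition \ref{pr5} together with Lemmas \ref{lemma1} and \ref{lemma3} to justify that $D^{\alpha}u$ exists and $D^{\alpha}I^{\alpha}\widetilde{f}=\widetilde{f}$, and use that $D^{\alpha}$ vanishes on constants before multiplying by $|t|_p^{\gamma}$. The only difference is that you verify $D^{\alpha}u_0=0$ by a direct computation with the radial formula, whereas the paper simply cites this as a known property of $D^{\alpha}$.
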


\begin{proof}
It follows from Proposition \ref{pr5}, Lemma \ref{lemma1}, and Lemma \ref{lemma3} that $D^{\alpha}u$ exists, $D^{\alpha}I^{\alpha}\widetilde{f}$ exists and $D^{\alpha}I^{\alpha}\widetilde{f}=\widetilde{f}$. Hence we can apply the operator $D^{\alpha}$ to the equation \eqref{0-8} and obtain

$$(D^{\alpha}u)(|t|_p)=D^{\alpha}u_0+D^{\alpha}I^{\alpha}\widetilde{f}(|\cdot|_p,u(|\cdot|_p))(|t|_p),$$
which is equivalent to
$$(D^{\alpha}u)(|t|_p)=\widetilde{f}(|\cdot|_p,u(|\cdot|_p))(|t|_p),$$
since $D^{\alpha}$ is equal to zero on constant functions.
\end{proof}


\begin{thebibliography}{}

\bibitem{Albeverio}\Label{Albeverio}
S. Albeverio, A. Yu. Khrennikov, and V. M. Shelkovich, Theory of p-Adic Distributions, Cambridge University Press, 2010.

\bibitem{Grad}\Label{Grad}
I.S. Gradshteyn and I.M. Ryzhik, \textit{Tables of Integrals, Series and Products}, Academic Press, San Diego, 1996.

\bibitem{K1}\Label{K1}
A.N. Kochubei, Radial Solutions of non-Archimedean Pseudo-differential Equations, \textit{Pacif. J. Math.}, \textbf{269} (2014), 355-369. https://dx.doi.org/10.2140/pjm.2014.269.355.

\bibitem{K2}\Label{K2}
A.N. Kochubei, Nonlinear pseudo-differential equations for radial real functions on a non-Archimedean field, \textit{J. Math. Anal. Appl.}, \textbf{483} (2020). https://doi.org/10.1016/j.jmaa.2019.123609.

\bibitem{K3}\Label{K3}
A. N.  Kochubei, Non-Archimedean radial calculus: Volterra operator and Laplace transform, {\it Integr. Equat. Oper. Theory} {\bf 92: 44} (2020), 17 pp. https://doi.org/10.1007/s00020-020-02604-6.

\bibitem{Kozyrev}\Label{Kozyrev}
S. V. Kozyrev, Methods and applications of ultrametric and p-adic analysis: from wavelet theory to biophysics. Proc. Steklov Inst. Math. 274(1 Suppl.), 1–84, 2011. https://doi.org/10.1134/S0081543811070017.

\bibitem{Vlad}\Label{Vlad}
V. S. Vladimirov, I. V. Volovich and E. I. Zelenov, \textit{p-Adic Analysis and Mathematical
Physics}, World Scientific, Singapore, 1994.

\end{thebibliography}
\end{document}